\theoremstyle{plain}
\newtheorem{thm}{Theorem} [section]
\newtheorem{cor}[thm]{Corollary} 
\newtheorem{lemma}[thm]{Lemma} 
\newtheorem{prop}[thm]{Proposition}
\theoremstyle{remark}
\theoremstyle{definition}
\newtheorem{notation}[thm]{Notation}
\def\today{{\number\day\space
 \ifcase\month\or
  January\or February\or March\or April\or May\or June\or
  July\or August\or September\or October\or November\or December\fi
 \space\number\year}}
\newcommand\Cpx{{\mathbb C}}
\newcommand\diag{\text{\rm diag}}
\newcommand\eps{{\varepsilon}}
\newcommand\lambdat{{\tilde\lambda}}
\newcommand\Ibar{{\overline I}}
\newcommand\Kbar{{\overline K}}
\newcommand\Kt{{\widetilde K}}
\newcommand\lspan{\mathrm{span}\,}
\newcommand\Mcal{{\mathcal{M}}}
\newcommand\mut{{\tilde{\mu}}}
\newcommand\Mt{{\widetilde M}}
\newcommand\Nats{{\mathbb N}}
\newcommand\nut{{\tilde{\nu}}}
\newcommand\Reals{{\mathbb R}}
\newcommand\Proj{{\mathrm{Proj}}}
\newcommand\SVF{C^{(r)}_{\mathbb{R}}[0,1]_+^\downarrow} 
\newcommand\tdet{{\textstyle\det}}
\newcommand\Tr{{\mathrm{Tr}}}
\begin{document}

\title[Characterization of singular numbers]{Characterization of singular numbers of products of operators in matrix algebras and finite von Neumann algebras}

\author[Bercovici]{H.\ Bercovici$^1$}
\address{H.\ Bercovici, Department of Mathematics, Indiana University, Bloomington, IN 47405, USA}
 \thanks{\footnotesize $^1$Research supported in part by a grant from the NSF}
 \email{bercovic@indiana.edu}
\author[Collins]{B.\ Collins$^2$}
 \address{B.\ Collins, D\'epartement de Math\'ematique et Statistique, Universit\'e d'Ottawa,
585 King Edward, Ottawa, ON, K1N6N5 Canada,
WPI AIMR, Tohoku, Sendai, 980-8577 Japan
and 
CNRS, Institut Camille Jordan Universit\'e  Lyon 1,
France}
 \email{bcollins@uottawa.ca}
 \thanks{\footnotesize $^2$Research supported in part by ERA, NSERC and AIMR}
 \author[Dykema]{K.\ Dykema$^1$}
 \address{K.\ Dykema, Department of Mathematics, Texas A\&M University,
 College Station, TX 77843-3368, USA}
 \email{kdykema@math.tamu.edu}
\author[Li]{W.S.\ Li$^1$}
\address{W.S.\ Li, School of Mathematics, Georgia Institute of Technology, Atlanta, GA 30332-1060, USA}
  \email{li@math.gatech.edu}
 
\subjclass[2000]{15A42, 46L10}
 
\date{June 25, 2013}

\begin{abstract}
We characterize in terms
of inequalities the possible generalized singular numbers of a product $AB$ of operators $A$ and $B$
having given generalized singular numbers,
in an arbitrary finite von Neumann algebra.
We also solve the analogous problem in matrix algebras $M_n(\Cpx)$, which seems to be new insofar as we do not
require $A$ and $B$ to be invertible.
\end{abstract}

\maketitle

\section{Introduction and summary of results}

H.\ Weyl~\cite{W12} asked:
what are the possible eigenvalues of $A+B$ when $A$ and $B$ are $n\times n$ 
symmetric matrices whose eigenvalues are given?
A complete answer was conjectured by 
A.\ Horn~\cite{H62}.
This problem became known as the Horn problem, attracted the attention of many mathematicians, and was finally
solved (proving Horn's conjecture) with the critical input of 
A. A.\ Klyachko~\cite{Kl98}
and Knutson and Tao~\cite{KT99}.
We refer to \cite{Fulton00} for a description of the results. 
Later, these results have been extended in many directions. Let us mention two of them 
of interest for this paper: the multiplicative direction, and the infinite dimensional direction.

In the multiplicative direction, one problem is to describe the possible singular values of 
$AB$ when $A$ and $B$ are $n\times n$ 
symmetric matrices whose eigenvalues are given. 
We will call this the {\em multiplicative Horn problem}.
This problem is fully solved in the case where $A$ and $B$
are invertible matrices, because
Kyachko~\cite{Kl00} showed it is equivalent to the additive problem after taking logarithms.
We could not find a proof in the literature for the
case where $A$ and $B$ need not be invertible and we solve it here.
The solution to this problem is a limit of the invertible case,
but the description is perhaps
not completely obvious;
the proof relies on the solution of the invertible case and on an interpolation result from~\cite{BLT09}.

In the infinite dimensional direction
(meaning here,
in infinite dimensional von Neumann algebras that have normal, faithful traces; thus, in so-called finite von Neumann algebras),
it was proved in~\cite{BCDLT10} that the 
solution to the additive Horn problem essentially survives, after natural adaptation to
the infinite dimensional setting. 

The main result of this paper is to settle the multiplicative Horn problem in the setting of finite von Neumann algebras.
Similar to the additive case, the result is an
infinite dimensional modification of the finite dimensional case.

In von Neumann algebras that have the Connes embedding property (namely, those that embed in an ultrapower $R^\omega$
of the hyperfinite II$_1$-factors, that is, where finite tuples of elements can be approximated in mixed moments by complex matrices)
it is not so difficult to see, nor is it surprising, that the additive and multiplicative Horn problems have solutions described by the obvious
infinite dimensional modifications of the finite dimensional solutions (see~\cite{BL01}).
Thus, our main result shows that,
as in the additive case, the solution of the multiplicative Horn problem in arbitrary finite 
von Neumann algebras is equivalent to its counterpart in finite von Neumann algebras
that have the Connes embedding property.
It is not known whether all finite von Neumann algebras 
with separable predual have the Connes embedding property.

The main theorems of this paper are Theorems~\ref{thm:KKt} and \ref{thm:MMt}.
The paper is organized as follows:
Section~\ref{sec:prelimMats} contains preliminaries about Horn triples and matrix algebras;
Section~\ref{sec:singMat} proves inequalities involving singular numbers in matrix algebras;
Section~\ref{sec:non-inv} gives the solution of the multiplicative Horn problem for all (including non-invertible) matrices;
Section~\ref{sec:preliminaries} contains some preliminaries about finite
von Neumann algebras and Horn triples;
Section~\ref{sec:sing} proves inequalities involving singular numbers of products in finite von Neumann algebras;
Section~\ref{sec:finvN} gives the solution of the multiplicative Horn problem in finite von Neumann algebras.

\section{Preliminaries on Horn triples and matrix algebras}
\label{sec:prelimMats}

\subsection{Horn triples and additive Horn inequalities}
\label{subsec:HornTrips}

Given integers $1\le r\le n$, to each triple $(I,J,K)$ of subsets of $\{1,\ldots,n\}$, each of cardinality $r$,
and writing
\begin{align*}
I&=\{i(1)<i(2)<\cdots<i(r)\}, \\
J&=\{j(1)<j(2)<\cdots<j(r)\}, \\
K&=\{k(1)<k(2)<\cdots<k(r)\},
\end{align*}
if the identity
\begin{equation}\label{eq:IJKident}
\sum_{\ell=1}^r\big((i(\ell)-\ell)+(j(\ell)-\ell)+(k(\ell)-\ell)\big)=2r(n-r),
\end{equation}
holds,
then one associates the Littlewood--Richardson coefficient $c_{IJK}$, which is a nonnegative integer.
(See, e.g., Fulton~\cite{Fulton00} for more on this, though note that his $K$ in triples $(I,J,K)$ corresponds
to our $\overline K$, under the operation defined below by~\eqref{eq:Ibar}.)
Supposing $A,B$ and $C$ are $n\times n$ Hermitian matrices with eigenvalues (listed according to multiplicity
and in decreasing order) $\alpha=(\alpha_1,\ldots,\alpha_n)$ for $A$,
$\beta=(\beta_1,\ldots,\beta_n)$ for $B$ and $\gamma=(\gamma_1,\ldots,\gamma_n)$ for $C$,
the corresponding Horn inequality is
\begin{equation}\label{eq:Horn}
\sum_{i\in I}\alpha_i+\sum_{j\in J}\beta_j+\sum_{k\in K}\gamma_k\le0.
\end{equation}
Horn's conjecture is equivalent to the assertion
that the set of all triples of eigenvalue sequences $(\alpha,\beta,\gamma)$ arising from $n\times n$ Hermitian matrices
$A$, $B$ and $C$ subject to $A+B+C=0$
equals the set of triples $(\alpha,\beta,\gamma)$ such that
the equality
\begin{equation}\label{eq:Tr=}
\sum_{i=1}^n\alpha_i+\beta_i+\gamma_i=0
\end{equation}
holds and
the inequality~\eqref{eq:Horn} holds for all $(I,J,K)$ such that $c_{IJK}>0$.
Belkale~\cite{Be01} showed that the inequalities with $c_{IJK}>1$ are redundant, so that Horn's conjecture concerns
the convex body determined by the inequalities~\eqref{eq:Horn}
for all triples $(I,J,K)$ with $c_{IJK}=1$.

For future use, we let $H(n,r)$ be the set of all triples $(I,J,K)$ as described above, satisfying~\eqref{eq:IJKident} and with $c_{IJK}=1$.
For convenience, we also declare that $(\varnothing,\varnothing,\varnothing)$ is a Horn triple and let
$H(n,0)=\{(\varnothing,\varnothing,\varnothing)\}$.

Given $n\in\Nats$ and a subset $K$ of $\{1,\ldots,n\}$, following \cite{Fulton00} we let
\begin{equation}\label{eq:Ibar}
\Kbar=\{n+1-k\mid k\in K\}.
\end{equation}
Letting $D=-C$ and letting $\rho$ be the eigenvalue sequence of $D$,  we have $\gamma_k=\rho_{n+1-k}$ and
from~\eqref{eq:Horn} and~\eqref{eq:Tr=} we get the two inequalities
\begin{gather}
\sum_{i\in I}\alpha_i+\sum_{j\in J}\beta_j\le\sum_{k\in\Kbar}\rho_k \label{eq:AddHorn<} \\
\sum_{i\in I^c}\alpha_i+\sum_{j\in J^c}\beta_j\ge\sum_{k\in\Kbar^c}\rho_k, \label{eq:AddHorn>}
\end{gather}
where in~\eqref{eq:AddHorn>} the complements $I^c$, $J^c$ and $\Kbar^c$ are taken in $\{1,\ldots,n\}$.
These may be called the additive Horn inequalities for $D=A+B$.

\subsection{The intersection property in matrix algebras}
\label{subsec:IPmat}

A {\em full flag} in $\Cpx^n$ is an increasing sequence $E=\{E_1,\ldots,E_n\}$ of subspaces of $\Cpx^n$
such that $\dim(E_j)=j$ for all $j$.

Given $n\in\Nats$ and a set $I=\{i(1),\ldots,i(r)\}$ with $1\le i(1)<i(2)\cdots<i(r)\le n$,
and given a full flag $E$ in $M_n(\Cpx)$,
we consider the corresponding {\em Schubert variety} $S(E,I)$.
It is the set of all subspaces $V\subseteq\Cpx^n$ of dimension $r$ such that for all $\ell\in\{1,\ldots,r\}$,
\[
\dim(V\cap E_{i(\ell)})\ge\ell.
\]

Given  subsets $I$, $J$ and $K$ of $\{1,\ldots,n\}$, each of cardinality $r$, we say the triple $(I,J,K)$
has the {\em intersection property} in $M_n(\Cpx)$ if,
whenever $E$, $F$ and $G$ are full flags in $\Cpx^n$,
the intersection $S(E,I)\cap S(F,J)\cap S(G,K)$ is nonempty.

It is well known that every triple $(I,J,K)\in\bigcup_{r=0}^n H(n,r)$ has the intersection property.
In fact, a construction in~\cite{BCDLT10}
provides for every $(I,J,K)\in H(n,r)$ an algorithm to find the projection onto a subspace belonging to
$S(E,I)\cap S(F,J)\cap S(G,K)$
as a lattice polynomial of the projections onto the subspaces in the flags $E$, $F$ and $G$,
provided that the latter are in general position.

\section{Inequalities for singular numbers in matrix algebras}
\label{sec:singMat}

In this section, we prove inequalities involving singular numbers of a product of matrices
(Proposition~\ref{prop:ABCMn}).
This can be viewed as a template for the proof of an analogous inequality for singular numbers in finite
von Neumann algebras that is found in Section~\ref{sec:sing},
but the result is also used in Section~\ref{sec:non-inv} to help solve the multiplicative Horn problem for (non-invertible) matrices.

Recall that the singular numbers of an $n\times n$ complex matrix $A\in M_n(\Cpx)$
are $\|A\|=s_1(A)\ge s_2(A)\ge\cdots\ge s_n(A)\ge0$,
where
\begin{equation}\label{eq:defsingMn}
s_j(A)=\inf\|A(1-P)\|,
\end{equation}
with the infimum over all self-adjoint projections $P$ of rank $j-1$.
In other words, singular numbers of $A$ are the eigenvalues of $|A|=(A^*A)^{1/2}$, listed
according to multiplicity and in decreasing order.
Note also, for $A\in M_n(\Cpx)$, we have
\begin{equation}\label{eq:absdet}
|\tdet(A)|=\tdet(|A|)=\prod_{j=1}^ns_j(A)
\end{equation}
and if $A$ is invertible, then
\[
|\tdet(A)|=\exp\big(\Tr(\log|A|)\big),
\]
where $\Tr$ is the unnormalized trace.

\begin{lemma}\label{lem:detsnum}
Suppose $A\in M_n(\Cpx)$.
Let $v_1,\ldots,v_n$ be an orthonormal set of eigenvectors of $|A|$ with corresponding eigenvalues
$s_1(A),\ldots,s_n(A)$, respectively.
Consider the full flag $E_1\subsetneq\cdots\subsetneq E_n$ given by
\[
E_k=\lspan\{v_1,v_2,\ldots,v_k\}.
\]
Let $I=\{i(1),\ldots,i(r)\}\subseteq\{1,\ldots,n\}$, with $i(1)<i(2)<\cdots<i(r)$,
suppose $V\subseteq\Cpx^n$ is a subspace of dimension $r$ with
\[
\dim(V\cap E_{i(\ell)})\ge\ell\quad(1\le\ell\le r)
\]
and let $P\in M_n(\Cpx)$ be the self-adjoint projection onto $V$.
Let $Q$ be the self-adjoint projection onto an $r$-dimensional subspace containing $A(V)$.
Let $W\in M_n(\Cpx)$ be any partial isometry such that $WW^*=P$ and $W^*W=Q$.
Let $\tdet_P$ denote the determinant on the algebra $PM_n(\Cpx)P$, which is unitarily isomorphic to the $r\times r$ matrices.
Then
\[
\big|\tdet_P(WAP)\big|\ge s_{i(1)}(A)s_{i(2)}(A)\cdots s_{i(r)}(A).
\]
\end{lemma}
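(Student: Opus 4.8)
The plan is to show that $|\tdet_P(WAP)|$ is independent of the auxiliary data $Q$ and $W$, being in fact the $r$-dimensional volume of the image under $|A|$ of a unit cube in $V$, and then to bound this volume below using the flag condition on $V$ together with a computation in the exterior power $\Lambda^r\Cpx^n$.

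First I would compute, inside the algebra $PM_n(\Cpx)P$, the positive operator $(WAP)^*(WAP)=PA^*W^*WAP=PA^*QAP$. Since $A(V)\subseteq\mathrm{ran}(Q)$ we have $QAP=AP$, so $(WAP)^*(WAP)=PA^*AP=P|A|^2P$. Using $|\tdet_P(X)|=\tdet_P(|X|)$ and squaring, this gives $|\tdet_P(WAP)|^2=\tdet_P(P|A|^2P)$. Now pick any orthonormal basis $u_1,\dots,u_r$ of $V$; the matrix of $P|A|^2P$ in this basis is the Gram matrix $\big(\langle|A|u_i,|A|u_j\rangle\big)_{i,j}$, whose determinant equals $\|\,|A|u_1\wedge\cdots\wedge|A|u_r\|^2=\|(\Lambda^r|A|)(u_1\wedge\cdots\wedge u_r)\|^2$, where $\Lambda^r|A|$ denotes the $r$th exterior power of $|A|$ acting on $\Lambda^r\Cpx^n$ with its natural inner product. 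Thus $|\tdet_P(WAP)|^2=\|(\Lambda^r|A|)(\xi)\|^2$ for $\xi:=u_1\wedge\cdots\wedge u_r$, a unit vector.

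Next I would choose the basis $u_1,\dots,u_r$ carefully. Because $\dim(V\cap E_{i(\ell)})\ge\ell$ and $E_{i(1)}\subseteq\cdots\subseteq E_{i(r)}$, a Gram--Schmidt procedure performed inside $V$ yields an orthonormal basis $u_1,\dots,u_r$ of $V$ with $u_\ell\in E_{i(\ell)}$ for every $\ell$. The eigenvectors of $\Lambda^r|A|$ are the decomposable vectors $v_M:=v_{m(1)}\wedge\cdots\wedge v_{m(r)}$ for $M=\{m(1)<\cdots<m(r)\}\subseteq\{1,\dots,n\}$, with eigenvalue $\prod_{\ell=1}^r s_{m(\ell)}(A)$. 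Expanding each $u_\ell$ over $v_1,\dots,v_{i(\ell)}$ and multiplying out, every $v_M$ occurring with nonzero coefficient in $\xi$ must satisfy $m(\ell)\le i(\ell)$ for all $\ell$: the term $v_{k(1)}\wedge\cdots\wedge v_{k(r)}$ with $k(\ell)\le i(\ell)$ reorders into such a $v_M$, and at least $\ell$ of the distinct indices $k(1),\dots,k(r)$ are $\le i(\ell)$ (namely $k(1),\dots,k(\ell)$), so the $\ell$th smallest of them is $\le i(\ell)$. Since $s_1(A)\ge s_2(A)\ge\cdots$ is nonincreasing, each such $M$ satisfies $\prod_\ell s_{m(\ell)}(A)\ge\prod_\ell s_{i(\ell)}(A)$.

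Finally, writing $\xi=\sum_M c_M v_M$ with $\sum_M|c_M|^2=1$ and only the admissible $M$ above contributing, we get
\[
|\tdet_P(WAP)|^2=\sum_M|c_M|^2\Big(\prod_{\ell=1}^r s_{m(\ell)}(A)\Big)^2\;\ge\;\Big(\prod_{\ell=1}^r s_{i(\ell)}(A)\Big)^2 ,
\]
and taking square roots gives the claim. The argument is mostly a chain of identifications; the one substantive point — and the place where the Schubert conditions $\dim(V\cap E_{i(\ell)})\ge\ell$ are used — is the combinatorial observation that those conditions force every eigenvalue of $\Lambda^r|A|$ ``seen'' by $\xi$ to dominate $\prod_\ell s_{i(\ell)}(A)$.
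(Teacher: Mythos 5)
Your proof is correct, but it takes a genuinely different route from the paper's. The paper establishes the stronger pointwise inequality $s_\ell(WAP)\ge s_{i(\ell)}(A)$ for each $\ell\in\{1,\dots,r\}$ by a min--max argument: given any subspace $F\subseteq V$ of dimension $\ell-1$, the dimension count $\dim(V\cap E_{i(\ell)})\ge\ell$ forces a unit vector $x\in(V\ominus F)\cap E_{i(\ell)}$, and then $\|WAPx\|=\|Ax\|\ge s_{i(\ell)}(A)$; the lemma follows from $|\tdet_P(WAP)|=\prod_\ell s_\ell(WAP)$. You instead reduce $|\tdet_P(WAP)|^2$ to $\tdet_P(P|A|^2P)$, identify this Gram determinant with $\|(\Lambda^r|A|)(u_1\wedge\cdots\wedge u_r)\|^2$, choose the $u_\ell$ adapted to the flag, and observe that the admissible index sets $M$ all have eigenvalue $\ge\prod_\ell s_{i(\ell)}(A)$. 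Both arguments are sound. The paper's approach buys you the individual singular-value estimates (not just their product) and --- more to the point in this paper --- is the version that generalizes verbatim to the finite von Neumann algebra setting: Lemma~\ref{lem:FKPAP} is exactly the continuous analogue of the paper's argument, where one bounds $s^{(P\Mcal P)}_{WAP}$ pointwise. Your exterior-power computation is cleaner for matrices but does not transport as directly to II$_1$-factors, where $\Lambda^r$ is not available in the same form. One tiny point worth making explicit in your write-up: the Gram--Schmidt step producing $u_\ell\in E_{i(\ell)}$ uses the nesting $V\cap E_{i(1)}\subseteq\cdots\subseteq V\cap E_{i(r)}$ together with the dimension bounds, and your indexing argument (the $\ell$th smallest of $k_1,\dots,k_r$ is at most $i(\ell)$) is exactly right.
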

\begin{proof}
Since $|\tdet_P(WAP)|=\prod_{\ell=1}^rs_\ell(WAP)$,
it will suffice to show
\begin{equation}\label{eq:sineq}
s_\ell(WAP)\ge s_{i(\ell)}(A)\quad(\ell\in\{1,\ldots,r\}).
\end{equation}
Suppose $F\subseteq V$ is a subspace of dimension $\ell-1$.
Since its orthocomplement $V\ominus F$ has dimension $r-\ell+1$ and since the dimension of $V\cap E_{i(\ell)}$
is as least $\ell$, there is a unit vector $x$ in $(V\ominus F)\cap E_{i(\ell)}$.
Then $\|WAPx\|=\|Ax\|\ge s_{i(\ell)}$.
This proves~\eqref{eq:sineq}.
\end{proof}

\begin{notation}
We call a flag $E=(E_1,E_2,\ldots,E_n)$ as in Lemma~\ref{lem:detsnum} an
{\em eigenvector flag} of $|A|$.
\end{notation}

Though the following proposition follows from Klyachko's Theorem (given as Theorem~\ref{thm:klyachko} below),
we will give a proof here because it provides a model for the proof of the finite von Neumann algebra
version, Theorem~\ref{thm:ABC}. 
\begin{prop}\label{prop:ABCMn}
Let $A,B,C\in M_n(\Cpx)$ be such that $ABC=1_n$.
Suppose $(I,J,K)\in H(n,r)$ for some $0\le r\le n$.
Then
\begin{equation}\label{eq:ABCMn}
\prod_{\ell=1}^rs_{i(\ell)}(A)s_{j(\ell)}(B)s_{k(\ell)}(C)\le 1.
\end{equation}
\end{prop}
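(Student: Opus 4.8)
The plan is to combine Lemma~\ref{lem:detsnum} with the intersection property of Horn triples. First I would use the hypothesis $(I,J,K)\in H(n,r)$, which guarantees both the dimension identity~\eqref{eq:IJKident} and — more importantly here — that $(I,J,K)$ has the intersection property in $M_n(\Cpx)$. So I would pick an eigenvector flag $E$ of $|A|$, an eigenvector flag $F$ of $|B|$, and an eigenvector flag $G$ of $|C|$, and by the intersection property obtain an $r$-dimensional subspace $V\subseteq\Cpx^n$ lying in $S(E,I)\cap S(F,J)\cap S(G,K)$. The point of choosing these particular flags is that the Schubert conditions $\dim(V\cap E_{i(\ell)})\ge\ell$, etc., are exactly the hypotheses needed to apply Lemma~\ref{lem:detsnum} simultaneously to $A$, to $B$, and to $C$.

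Next I would track the subspace around the cycle. Let $P$ be the projection onto $V$; let $V'$ be an $r$-dimensional subspace containing $A(V)$ (generically $V'=A(V)$ when $V\cap\ker A=0$, but in general one just enlarges), with projection $P'$; and let $V''$ be an $r$-dimensional subspace containing $B(V')\supseteq BA(V)$, with projection $P''$. Since $ABC=1_n$, we have $CBA=1_n$ as well (a left inverse of a square matrix is a two-sided inverse), so $C(V'')\supseteq CBA(V)=V$; thus $V$ is an $r$-dimensional subspace containing $C(V'')$, which closes the loop. Now choose partial isometries $W_A$ with $W_AW_A^*=P'$, $W_A^*W_A=P$; $W_B$ with $W_BW_B^*=P''$, $W_B^*W_B=P'$; and $W_C$ with $W_CW_C^*=P$, $W_C^*W_C=P''$. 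Applying Lemma~\ref{lem:detsnum} three times gives
\[
|\tdet_{P'}(W_A A P)|\ge\prod_{\ell=1}^r s_{i(\ell)}(A),\quad
|\tdet_{P''}(W_B B P')|\ge\prod_{\ell=1}^r s_{j(\ell)}(B),\quad
|\tdet_{P}(W_C C P'')|\ge\prod_{\ell=1}^r s_{k(\ell)}(C).
\]

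Finally I would multiply these three inequalities and recognize the left-hand side as a determinant of a product that collapses to $1$. Indeed, consider the operator $T=(W_C C P'')(W_B B P')(W_A A P)$ acting on the range of $P$. Using $W_A^*W_A=P$, $W_B^*W_B=P'$, $W_C^*W_C=P''$ and the fact that $P$, $P'$, $P''$ are the respective final/initial projections, the composite $W_CW_BW_A$ is a partial isometry from $V$ onto $V$, hence (up to a choice) can be arranged to be $\mathrm{id}_V$, or at worst a unitary of $PM_n(\Cpx)P$ of modulus-one determinant; and the remaining factor is $P\,CBA\,P=P$ since $CBA=1_n$. Thus $|\tdet_P(T)|=1$, while multiplicativity of the determinant on $PM_n(\Cpx)P\cong M_r(\Cpx)$ gives $|\tdet_P(T)|=|\tdet_P(W_CCP'')|\cdot|\tdet_{P''}(W_BBP')|\cdot|\tdet_{P'}(W_AAP)|$ after inserting the intermediate partial isometries consistently. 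Combining, $1\ge\prod_{\ell=1}^r s_{i(\ell)}(A)s_{j(\ell)}(B)s_{k(\ell)}(C)$, which is~\eqref{eq:ABCMn}.

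The main obstacle is the bookkeeping in the last step: one must choose the three partial isometries $W_A,W_B,W_C$ coherently so that their composite is the identity on $V$ (not merely some partial isometry), and one must be careful that $\tdet_P$ is multiplicative across the change of corners $PM_nP\to P'M_nP'\to P''M_nP''\to PM_nP$. This is a routine but slightly delicate identification of $r\times r$ determinants under unitary conjugation; everything else is a direct application of Lemma~\ref{lem:detsnum} and the intersection property.
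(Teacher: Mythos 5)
Your outline uses the right tools (Lemma~\ref{lem:detsnum} plus the intersection property, then a determinant collapse), but it places the Schubert conditions on the wrong subspaces, and that is exactly where the argument breaks. You apply the intersection property to the three eigenvector flags $(E,F,G)$ and obtain one $r$-dimensional $V$ with $\dim(V\cap E_{i(\ell)})\ge\ell$, $\dim(V\cap F_{j(\ell)})\ge\ell$, $\dim(V\cap G_{k(\ell)})\ge\ell$. But when you invoke Lemma~\ref{lem:detsnum} for $B$, its hypothesis must hold for the subspace on which you are compressing $B$ --- in your chain that is $V'\supseteq A(V)$, not $V$. Nothing gives $\dim(V'\cap F_{j(\ell)})\ge\ell$, so the bound $|\tdet_{P''}(W_B B P')|\ge\prod_{\ell} s_{j(\ell)}(B)$ is unjustified, and the same problem recurs with $C$ on $V''$. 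You cannot repair it by applying the lemma for $B$ on $V$ itself either, since then the three compressions $P\to P'\to P''\to P$ no longer chain and the triple product of determinants has no reason to collapse to $1$.

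The paper's proof avoids this by noticing that $ABC=1_n$ in $M_n(\Cpx)$ forces $A,B,C$ to be invertible, and by applying the intersection property not to $(E,F,G)$ but to the pulled-back flags $C^{-1}B^{-1}(E)$, $C^{-1}(F)$, $G$. The $V$ so obtained satisfies $\dim(V\cap G_{k(\ell)})\ge\ell$, $\dim(C(V)\cap F_{j(\ell)})\ge\ell$, $\dim(BC(V)\cap E_{i(\ell)})\ge\ell$, which are exactly the hypotheses of the lemma on the three domains $V$, $C(V)$, $BC(V)$ actually traversed by the chain, with $ABC(V)=V$ closing the loop. (Incidentally, invertibility also makes your ``enlarge $A(V)$ to $V'$'' step vacuous, since $A(V)$ already has dimension $r$.) Once the Schubert conditions are placed on $V$, $C(V)$, $BC(V)$, the bookkeeping of partial isometries and the multiplicativity of $\tdet_P$ on $PM_n(\Cpx)P$ go through essentially as you describe.
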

\begin{proof}
If $r=0$ then $(I,J,K)=(\varnothing,\varnothing,\varnothing)$ and~\eqref{eq:ABCMn} is by definition an equality.
So we may suppose $r\ge1$.
Let $E$, $F$ and $G$ be eigenvector flags of $|A|$, $|B|$ and $|C|$, respectively.
We choose a subspace $V\subseteq\Cpx^n$ of dimension $r$ such that
\[
\dim(BC(V)\cap E_{i(\ell)})\ge\ell,\quad\dim(C(V)\cap F_{j(\ell)})\ge\ell,\quad\dim(V\cap G_{k(\ell)})\ge\ell,
\quad(1\le\ell\le r).
\]
This can be done by applying the intersection property to the flags $C^{-1}B^{-1}(E)$, $C^{-1}(F)$ and $G$.
Let $P$, $Q$ and $R$ be the self-adjoint projections onto $V$, $C(V)$ and $BC(V)$, respectively.
Let $W_{Q,P}$ and $W_{R,P}$ be partial isometries such that $W_{Q,P}^*W_{Q,P}=P=W_{R,P}^*W_{R,P}$,
$W_{Q,P}W_{Q,P}^*=Q$ and $W_{R,P}W_{R,P}^*=R$.
Note that, since $ABC=1_n$, we have $AR=PAR$.
Then applying Lemma~\ref{lem:detsnum} three times, we have
\begin{align*}
1&=\tdet_P(ABCP)=\tdet_P(PARBQCP) \\[1ex]
&=\tdet_P(PAW_{R,P})\tdet_P(W_{R,P}^*BW_{Q,P})\tdet_P(W_{Q,P}^*CP) \\[1ex]
&=\tdet_P(W_{R,P}AR)\tdet_Q(W_{Q,P}W_{R,P}^*BQ)\tdet_P(W_{Q,P}^*CP) \\
&\ge\prod_{\ell=1}^rs_{i(\ell)}(A)s_{j(\ell)}(B)s_{k(\ell)}(C),
\end{align*}
as required.
\end{proof}

\begin{cor}\label{cor:MnInvIneqs}
If $A,B\in M_n(\Cpx)$ are invertible and if $D=AB$, then for every $(I,J,K)\in\bigcup_{r=0}^n H(n,r)$, we have
\begin{gather}
\sum_{i\in I}\log s_i(A)+\sum_{j\in J}\log s_j(B)\le\sum_{k\in\Kbar}\log s_k(D) \label{eq:Mnsineq<} \\
\sum_{k\in\Kbar^c}\log s_k(D)\le \sum_{i\in I^c}\log s_i(A)+\sum_{j\in J^c}\log s_j(B), \label{eq:Mnsineq>}
\end{gather}
where in~\eqref{eq:Mnsineq>}, $\Kbar^c$, $I^c$ and $J^c$ indicate the respective complements in $\{1,\ldots,n\}$.
\end{cor}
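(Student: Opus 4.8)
The plan is to deduce both inequalities from Proposition~\ref{prop:ABCMn} by a simple change of variables. Since $A$ and $B$ are invertible, so is $D=AB$; set $C=D^{-1}=B^{-1}A^{-1}$, so that $ABC=1_n$. Fix $(I,J,K)\in H(n,r)$. If $r=0$ both displayed inequalities read $0\le0$, so assume $r\ge1$. Proposition~\ref{prop:ABCMn} applied to $A$, $B$, $C$ gives
\[
\prod_{\ell=1}^r s_{i(\ell)}(A)\, s_{j(\ell)}(B)\, s_{k(\ell)}(D^{-1})\le 1 ,
\]
and taking logarithms yields $\sum_{i\in I}\log s_i(A)+\sum_{j\in J}\log s_j(B)+\sum_{k\in K}\log s_k(D^{-1})\le 0$.

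Next I would record the elementary fact that, since $D$ is invertible with singular numbers $s_1(D)\ge\cdots\ge s_n(D)>0$, the singular numbers of $D^{-1}$ are $s_k(D^{-1})=s_{n+1-k}(D)^{-1}$ (the eigenvalues of $|D^{-1}|$ are the reciprocals of those of $|D|$, reordered decreasingly). Hence $\log s_k(D^{-1})=-\log s_{n+1-k}(D)$, and by the definition~\eqref{eq:Ibar} of $\Kbar$ we get $\sum_{k\in K}\log s_k(D^{-1})=-\sum_{k\in\Kbar}\log s_k(D)$. Substituting into the inequality of the previous paragraph gives exactly~\eqref{eq:Mnsineq<}.

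For~\eqref{eq:Mnsineq>} I would use multiplicativity of the determinant together with~\eqref{eq:absdet}: from $\prod_{i=1}^n s_i(A)\prod_{j=1}^n s_j(B)=|\det A|\,|\det B|=|\det D|=\prod_{k=1}^n s_k(D)$ we obtain the identity $\sum_{i=1}^n\log s_i(A)+\sum_{j=1}^n\log s_j(B)=\sum_{k=1}^n\log s_k(D)$. Subtracting~\eqref{eq:Mnsineq<} from this identity, and noting that taking complements in $\{1,\ldots,n\}$ turns the sums over $I$, $J$, $\Kbar$ into sums over $I^c$, $J^c$, $\Kbar^c$, produces~\eqref{eq:Mnsineq>}. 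There is essentially no genuine obstacle here beyond the bookkeeping with the reversal $k\mapsto n+1-k$ and the passage to complements; all the real content is already contained in Proposition~\ref{prop:ABCMn} and in the multiplicativity of the determinant.
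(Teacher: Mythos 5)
Your proof is correct and follows exactly the same route as the paper: apply Proposition~\ref{prop:ABCMn} with $C=D^{-1}$, use $s_k(D^{-1})=s_{n+1-k}(D)^{-1}$ to obtain~\eqref{eq:Mnsineq<}, and then deduce~\eqref{eq:Mnsineq>} from~\eqref{eq:absdet} together with $\det(D)=\det(A)\det(B)$. Nothing to add.
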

\begin{proof}
To obtain~\eqref{eq:Mnsineq<}, we apply Proposition~\ref{prop:ABCMn} with $C=D^{-1}$ and observe that we have
$s_k(C)=s_{n+1-k}(D)^{-1}$.
Now~\eqref{eq:Mnsineq>} follows from~\eqref{eq:Mnsineq<}, using~\eqref{eq:absdet}
and $\det(D)=\det(A)\det(B)$.
\end{proof}

\begin{thm}\label{thm:MnIneqs}
Let $A,B\in M_n(\Cpx)$ and let $D=AB$.
Then for every $(I,J,K)\in\bigcup_{r=0}^n H(n,r)$,
the inequalities~\eqref{eq:Mnsineq<} and~\eqref{eq:Mnsineq>} hold, with $-\infty$ allowed for values.
\end{thm}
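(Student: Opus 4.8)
The plan is to deduce Theorem~\ref{thm:MnIneqs} from the invertible case, Corollary~\ref{cor:MnInvIneqs}, by a soft limiting argument, with the only real care needed being that some of the singular numbers, and hence some of the logarithms appearing, may vanish; this is exactly why the statement permits the value $-\infty$.

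First I would fix $A,B\in M_n(\Cpx)$ and choose sequences $(\lambda_m)_{m\ge1}$ and $(\mu_m)_{m\ge1}$ of complex numbers tending to $0$ such that $A_m:=A-\lambda_m 1_n$ and $B_m:=B-\mu_m 1_n$ are invertible for every $m$, which is possible since $\det(A-z1_n)$ and $\det(B-z1_n)$ are nonzero polynomials in $z$ and so have only finitely many roots. Put $D_m:=A_mB_m$; then $A_m\to A$, $B_m\to B$, and, by continuity of matrix multiplication, $D_m\to D=AB$. Next I would use that each singular number is a continuous function on $M_n(\Cpx)$: from~\eqref{eq:defsingMn} one gets $|s_j(X)-s_j(Y)|\le\|X-Y\|$, so $s_i(A_m)\to s_i(A)$, $s_j(B_m)\to s_j(B)$, and $s_k(D_m)\to s_k(D)$ as $m\to\infty$. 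Composing with the map $t\mapsto\log t$, which is continuous from $[0,\infty)$ onto the extended half-line $[-\infty,\infty)$ (with $\log 0=-\infty$), and noting that addition is continuous on $[-\infty,\infty)$ because no summand is ever $+\infty$, each of the sums occurring in~\eqref{eq:Mnsineq<} and~\eqref{eq:Mnsineq>} formed from $A_m,B_m,D_m$ converges in $[-\infty,\infty)$ to the corresponding sum formed from $A,B,D$. Since Corollary~\ref{cor:MnInvIneqs} gives~\eqref{eq:Mnsineq<} and~\eqref{eq:Mnsineq>} for $A_m,B_m,D_m$ for every $m$, and since $\{(x,y):x\le y\}$ is closed in $[-\infty,\infty)^2$, these inequalities pass to the limit, yielding~\eqref{eq:Mnsineq<} and~\eqref{eq:Mnsineq>} for $A,B,D$.

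The argument involves no genuine obstacle; the one point requiring attention is the extended-real-valued bookkeeping, i.e.\ making sure the continuity and closedness statements are applied in $[-\infty,\infty)$ rather than $\Reals$. Because all singular numbers are bounded (by the relevant operator norms) the value $+\infty$ never arises, so there is no indeterminate $\infty-\infty$ and the limit step is unproblematic; alternatively one could split into the case where the right-hand side of the inequality in question is finite, where classical continuity of $\log$ on $(0,\infty)$ applies directly, and the case where it is $-\infty$, where one observes that the left-hand sides $\mathrm{LHS}_m$ then tend to $-\infty$ as well, forcing $\mathrm{LHS}=-\infty$. Treating $[-\infty,\infty)$ as a topological space simply packages both cases at once.
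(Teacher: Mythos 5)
Your proof is correct, and the overall strategy is the same as the paper's: deduce the non-invertible case from Corollary~\ref{cor:MnInvIneqs} by perturbing $A$ and $B$ to invertible matrices and passing to the limit. The one point of genuine difference is the perturbation scheme. The paper first uses the polar decompositions $A=U|A|$, $B=|B^*|V$ to reduce without loss of generality to the case $A\ge 0$, $B\ge 0$, and then perturbs by $A\mapsto A+\eps_1 1$, $B\mapsto B+\eps_2 1$. The payoff of that reduction is that the perturbed singular numbers are given exactly, $s_i(A+\eps_1 1)=\eps_1+s_i(A)$ and $s_j(B+\eps_2 1)=\eps_2+s_j(B)$, so the left-hand sides of~\eqref{eq:sumlog<Deps} and~\eqref{eq:sumlogDeps<} converge by the monotone convergence theorem (or just by the explicit formula), and only the convergence of $s_k(D(\eps_1,\eps_2))$ requires a continuity argument. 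Your version instead subtracts a generic small complex scalar to dodge the spectrum, which avoids the polar-decomposition reduction at the cost of invoking the Lipschitz continuity $|s_j(X)-s_j(Y)|\le\|X-Y\|$ of all three sets of singular numbers, not just those of $D$. Both are perfectly valid, and your handling of the extended-real-valued bookkeeping (viewing $\log$ as continuous into $[-\infty,\infty)$, using that $+\infty$ never occurs so sums are well defined and continuous, and observing that $\{x\le y\}$ is closed) correctly fills the only gap that could arise from vanishing singular numbers. In short: same route, slightly different (but equally workable) choice of approximating invertibles.
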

\begin{proof}
Writing $A=U|A|$ and $B=|B^*|V$ for unitaries $U$ and $V$ we have $D=U|A||B^*|V$ and $s_i(A)=s_i(|A|)$, $s_j(B)=s_j(|B^*|)$
and $s_k(D)=s_k(|A||B^*|)$.
Thus, we may without loss of generality assume $A\ge0$ and $B\ge0$.
Let $\eps_1,\eps_2>0$ and let $D(\eps_1,\eps_2)=(A+\eps_11)(B+\eps_21)$.
Then $s_i(A+\eps_11)=\eps_1+s_i(A)$ and $s_j(B+\eps_21)=\eps_2+s_j(B)$
and from Corollary~\ref{cor:MnInvIneqs} we get
\begin{gather}
\sum_{i\in I}\log(\eps_1+s_i(A))+\sum_{j\in J}\log(\eps_2+s_j(B))\le\sum_{k\in\Kbar}\log s_k(D(\eps_1,\eps_2)) \label{eq:sumlog<Deps} \\
\sum_{k\in\Kbar^c}\log s_k(D(\eps_1,\eps_2))\le\sum_{i\in I^c}\log(\eps_1+s_i(A))+\sum_{j\in J^c}\log(\eps_2+s_j(B)). \label{eq:sumlogDeps<}
\end{gather}
But letting  $\eps_1,\eps_2\to0$, we have $s_k(D(\eps_1,\eps_2))\to s_k(D)$ for each $k$,
and from~\eqref{eq:sumlog<Deps} and~\eqref{eq:sumlogDeps<}
we obtain the desired inequalities~\eqref{eq:Mnsineq<} and~\eqref{eq:Mnsineq>}, with possible values $-\infty$.
\end{proof}

\section{The multiplicative Horn problem for non-invertible matrices}
\label{sec:non-inv}

We let $\Reals^{n\downarrow}$ and, respectively, $\mathbb{R}_+^{n\downarrow}$ and $\mathbb{R}_+^{*n\downarrow}$
denote the sets of nonincreasing sequences real numbers and, respectively,
of nonnegative real numbers and of strictly positive real numbers,
having length $n$.
We will need the following theorem, which follows from the solution of the additive Horn problem and a result
of Kyachko~\cite{Kl00} (see Theorem 2 of~\cite{Sp05}).
Again, $H(n,r)$ is the set of Horn triples with Littlewood--Richardson coefficient equal to $1$, as
described in~\S\ref{subsec:HornTrips}.

\begin{thm}\label{thm:klyachko}
For sequences
\[
\lambda = (\lambda_1, \ldots , \lambda_n),\quad
\mu = (\mu_1, \ldots , \mu_n),\quad
\gamma = (\gamma_1, \ldots , \gamma_n)
\]
in $\mathbb{R}_+^{*n\downarrow}$,
there exist matrices $A,B,C\in M_n(C)$
such that $ABC=1_n$ 
and having singular values of $\lambda,\mu$ and $\gamma$, respectively,
if and only if the following hold:
\begin{equation}\label{eq:detID}
\prod_{i=1}^n\lambda_i\mu_i\gamma_i=1
\end{equation}
and
\begin{equation}\label{eq:multHornIneq}
\forall(I,J,K)\in\bigcup_{r=0}^n H(n,r),\quad
\prod_{i\in I }\lambda_i\prod_{j\in J}\mu_j\prod_{k\in K}\gamma_k\leq 1.
\end{equation}
\end{thm}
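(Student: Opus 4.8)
The plan is to reduce the statement, by passing to logarithms, to the additive Horn problem, whose solution I take as known (Klyachko~\cite{Kl98} and Knutson--Tao~\cite{KT99}; see~\cite{Fulton00}), together with the comparison between the additive and multiplicative problems for $GL_n(\Cpx)$ established by Klyachko~\cite{Kl00} (see Theorem~2 of~\cite{Sp05}). The necessity of the two conditions will be essentially free from what has already been proved, while the sufficiency is where the cited machinery enters.

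\emph{Necessity.} Suppose $A,B,C\in M_n(\Cpx)$ satisfy $ABC=1_n$ and have singular value sequences $\lambda,\mu,\gamma\in\mathbb{R}_+^{*n\downarrow}$. By~\eqref{eq:absdet}, $|\tdet(A)|=\prod_{i=1}^n\lambda_i$, and similarly for $B$ and $C$; since $\tdet(A)\tdet(B)\tdet(C)=\tdet(1_n)=1$, multiplicativity of $|\tdet(\cdot)|$ gives~\eqref{eq:detID} at once. The inequalities~\eqref{eq:multHornIneq} are, for each $(I,J,K)\in\bigcup_{r=0}^nH(n,r)$, precisely the conclusion of Proposition~\ref{prop:ABCMn} applied to $A,B,C$, so nothing further is needed in this direction.

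\emph{Sufficiency.} Now assume $\lambda,\mu,\gamma\in\mathbb{R}_+^{*n\downarrow}$ satisfy~\eqref{eq:detID} and~\eqref{eq:multHornIneq}. I would set $\alpha_i=\log\lambda_i$, $\beta_i=\log\mu_i$, $\gamma_i'=\log\gamma_i$ (all defined, since the entries are strictly positive) and write $\alpha,\beta,\gamma'$ for the resulting sequences in $\Reals^{n\downarrow}$. Taking logarithms turns~\eqref{eq:detID} into $\sum_{i=1}^n(\alpha_i+\beta_i+\gamma_i')=0$ and turns~\eqref{eq:multHornIneq} into the Horn inequalities $\sum_{i\in I}\alpha_i+\sum_{j\in J}\beta_j+\sum_{k\in K}\gamma_k'\le 0$ for all $(I,J,K)\in\bigcup_{r=0}^nH(n,r)$, which are of the form~\eqref{eq:Horn}. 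By Belkale's theorem~\cite{Be01}, the inequalities indexed by these $c_{IJK}=1$ triples already cut out the full additive Horn cone, so the solution of the additive Horn problem produces Hermitian matrices $X,Y,Z\in M_n(\Cpx)$ with $X+Y+Z=0$ whose eigenvalue sequences are $\alpha$, $\beta$, $\gamma'$. Finally, the additive--multiplicative comparison of~\cite{Kl00} (Theorem~2 of~\cite{Sp05}) promotes this to the required multiplicative statement: there are invertible $A,B,C\in M_n(\Cpx)$ with $ABC=1_n$ for which $\log|A|$, $\log|B|$, $\log|C|$ have eigenvalue sequences $\alpha$, $\beta$, $\gamma'$, equivalently for which $A$, $B$, $C$ have singular value sequences $\lambda$, $\mu$, $\gamma$.

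I expect the only real obstacle to be the last step, namely the passage from a solution of the additive problem ($X+Y+Z=0$) to a solution of the multiplicative problem ($ABC=1_n$) with matching singular numbers. This is not a formal manipulation: it rests on the geometry of the exponential map for the symmetric space $GL_n(\Cpx)/U(n)$ and on Thompson/Kostant-type convexity, and I would quote it from~\cite{Kl00,Sp05} rather than reprove it. It is also precisely here that strict positivity of the prescribed singular numbers is used, so that the logarithms and the relevant charts make sense; this is why the theorem is phrased over $\mathbb{R}_+^{*n\downarrow}$, and why handling possibly non-invertible matrices (the subject of the next section) will require a separate limiting argument rather than a direct application of this theorem.
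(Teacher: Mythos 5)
Your proposal is correct and matches the paper's approach: the paper states Theorem~\ref{thm:klyachko} without proof, citing it as a consequence of the solution to the additive Horn problem together with Klyachko's additive--multiplicative comparison~\cite{Kl00} (see Theorem~2 of~\cite{Sp05}), which is exactly the logarithmic reduction you carry out. Your use of Proposition~\ref{prop:ABCMn} and the determinant identity~\eqref{eq:absdet} for the necessity direction is also consistent with the paper, since that proposition is proved independently (and the paper notes it in turn follows from Theorem~\ref{thm:klyachko}), so there is no circularity.
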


This theorem fully solves the multiplicative Horn problem in $M_n(\Cpx)$ in the case
where $A,B,C$ are invertible.
In particular, the multiplicative Horn problem for invertibles is equivalent to the additive Horn
problem by taking logarithms.

In this section we will deal with the non-invertible case, namely:
given $\lambda,\mu\in\mathbb{R}_+^{n\downarrow}$,
what are the possible singular values $\nu\in\mathbb{R}_+^{n\downarrow}$ of $AB$
when $A$ and $B$ in $M_n(\Cpx)$
have respective singular values $\lambda$ and $\mu$?
We will denote the set of all such $\nu$ by $K_{\lambda,\mu}$ and call it the {\em multiplicative Horn body}.
It is equal the set of all singular values of matrices $\diag(\lambda)U\diag(\mu)$,
where $U$ ranges over the $n\times n$ unitary group.
To summarize, our goal in this section is to describe the set
\[
K_{\lambda,\mu}:=\{\nu\in\mathbb{R}_+^{n\downarrow}\mid \nu = \text{ singular values of }
\diag(\lambda)U\diag(\mu),\,U\in\mathbb{U}_n\}.
\]
As one might expect, the answer is a continuous limit of the invertible case, 
however with one subtlety in its description.
Note that, since the singular values of a matrix are continuous with respect to operator norm and since the unitary group is compact,
$K_{\lambda,\mu}$ is a compact subset of $\mathbb{R}_+^{n\downarrow}$.

See~\eqref{eq:Ibar} for the operation $I\mapsto\Ibar$.
Given $\lambda,\mu\in \mathbb{R}_+^{n\downarrow}$,
we let
\begin{align}
\Kt_{\lambda,\mu }=\bigg\{\nu\in \mathbb{R}_+^{n\downarrow}\;\bigg|\;&\forall(I,J,K)\in\bigcup_{r=0}^n H(n,r), \label{eq:Ktdef} \\
&\prod_{i\in I}\lambda_i\prod_{j\in J}\mu_j\leq \prod_{k\in \overline K}\nu_k \label{eq:Ktdef<} \\
&\text{ and }
\prod_{i\in I^c}\lambda_i\prod_{j\in J^c}\mu_j\geq \prod_{k\in\Kbar^c}\nu_k\bigg\}. \label{eq:Ktdef>}
\end{align}

\begin{lemma}\label{lem:invertibleCase}
If $\lambda,\mu \in \mathbb{R}_+^{*n\downarrow}$,
then $K_{\lambda,\mu}=\Kt_{\lambda,\mu}$.
\end{lemma}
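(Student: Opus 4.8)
The plan is to prove the two inclusions $K_{\lambda,\mu}\subseteq\Kt_{\lambda,\mu}$ and $\Kt_{\lambda,\mu}\subseteq K_{\lambda,\mu}$ separately: the first will follow from Theorem~\ref{thm:MnIneqs}, the second from Klyachko's Theorem~\ref{thm:klyachko}. The bridge between the two descriptions is the substitution $\gamma_k:=\nu_{n+1-k}^{-1}$, which, when $\nu$ is the singular value sequence of an invertible product $D=AB$, is the singular value sequence of $C:=D^{-1}$, so that $ABC=1_n$; the only computation one needs is that $\prod_{k\in K}\gamma_k=\big(\prod_{k\in\Kbar}\nu_k\big)^{-1}$ for every subset $K$ of $\{1,\dots,n\}$, which is exactly what makes~\eqref{eq:multHornIneq} for $(\lambda,\mu,\gamma)$ the same as~\eqref{eq:Ktdef<} for $\nu$.

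For $K_{\lambda,\mu}\subseteq\Kt_{\lambda,\mu}$, I would take $\nu\in K_{\lambda,\mu}$ and write $\nu$ as the singular value sequence of $D=\diag(\lambda)U\diag(\mu)=AB$ with $A=\diag(\lambda)$ and $B=U\diag(\mu)$, so that $A$ and $B$ have singular values $\lambda$ and $\mu$; since $\lambda,\mu\in\mathbb{R}_+^{*n\downarrow}$, the matrix $D$ is invertible and $\nu\in\mathbb{R}_+^{*n\downarrow}$. Applying Theorem~\ref{thm:MnIneqs} to $A$ and $B$ gives~\eqref{eq:Mnsineq<} and~\eqref{eq:Mnsineq>} with all terms finite, and exponentiating these yields precisely~\eqref{eq:Ktdef<} and~\eqref{eq:Ktdef>}. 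Hence $\nu\in\Kt_{\lambda,\mu}$.

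For the reverse inclusion, let $\nu\in\Kt_{\lambda,\mu}$. First observe that membership in $\Kt_{\lambda,\mu}$ already forces the determinant identity: inequality~\eqref{eq:Ktdef<} for the maximal triple $(\{1,\dots,n\},\{1,\dots,n\},\{1,\dots,n\})$, which lies in $H(n,n)$ since its Littlewood--Richardson coefficient is $1$, gives $\prod_i\lambda_i\prod_j\mu_j\le\prod_k\nu_k$, while~\eqref{eq:Ktdef>} for the empty triple $(\varnothing,\varnothing,\varnothing)\in H(n,0)$ gives the reverse inequality; thus $\prod_k\nu_k=\prod_i\lambda_i\prod_j\mu_j>0$, and in particular $\nu\in\mathbb{R}_+^{*n\downarrow}$. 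Now put $\gamma_k:=\nu_{n+1-k}^{-1}$, which also lies in $\mathbb{R}_+^{*n\downarrow}$. Then $\prod_i\lambda_i\mu_i\gamma_i=\big(\prod_i\lambda_i\prod_i\mu_i\big)\big(\prod_k\nu_k\big)^{-1}=1$, which is~\eqref{eq:detID}, and for every $(I,J,K)\in\bigcup_{r=0}^nH(n,r)$ the inequality $\prod_{i\in I}\lambda_i\prod_{j\in J}\mu_j\prod_{k\in K}\gamma_k\le1$ is, by the computation above, just~\eqref{eq:Ktdef<} for $\nu$; so the hypotheses of Theorem~\ref{thm:klyachko} are satisfied by $(\lambda,\mu,\gamma)$. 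That theorem produces $A,B,C\in M_n(\Cpx)$ with $ABC=1_n$ having singular values $\lambda,\mu,\gamma$ respectively, and then $D:=AB=C^{-1}$ has $s_k(D)=s_{n+1-k}(C)^{-1}=\gamma_{n+1-k}^{-1}=\nu_k$. Thus $\nu$ is the singular value sequence of a product of a matrix with singular values $\lambda$ and one with singular values $\mu$; since $K_{\lambda,\mu}$ is exactly the set of such sequences, $\nu\in K_{\lambda,\mu}$.

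I do not expect a real obstacle: the argument is a reduction to the two cited theorems. The one point that must not be glossed over is that an element of $\Kt_{\lambda,\mu}$ automatically satisfies both the determinant identity~\eqref{eq:detID} and the strict positivity needed to invoke Theorem~\ref{thm:klyachko}; beyond that, the only care required is in keeping the reversal $I\mapsto\Ibar$ straight when passing between $\nu$ and the singular values $\gamma$ of $C$.
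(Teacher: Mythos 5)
Your proof is correct. The reverse inclusion $\Kt_{\lambda,\mu}\subseteq K_{\lambda,\mu}$ matches the paper's argument: extract the determinant identity from the maximal and empty Horn triples, pass to $\gamma_k=\nu_{n+1-k}^{-1}$, and invoke Theorem~\ref{thm:klyachko}. For the forward inclusion $K_{\lambda,\mu}\subseteq\Kt_{\lambda,\mu}$, however, you take a slightly different route: you appeal to Theorem~\ref{thm:MnIneqs} (the paper's own singular-number inequalities, proved via Proposition~\ref{prop:ABCMn} and Corollary~\ref{cor:MnInvIneqs}), whereas the paper obtains this direction again from Theorem~\ref{thm:klyachko}, noting that membership of $\nu$ in $K_{\lambda,\mu}$ is, after the substitution $\gamma_k=\nu_{n+1-k}^{-1}$, equivalent to~\eqref{eq:nudetID} and~\eqref{eq:numultHornIneq}, and that~\eqref{eq:Ktdef>} then follows from~\eqref{eq:Ktdef<} by dividing out the determinant identity. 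Both versions are short and rigorous; yours has the small advantage of parallel structure with the proof of Theorem~\ref{thm:KKt} (where the forward inclusion likewise rests on Theorem~\ref{thm:MnIneqs}), while the paper's has the advantage of needing only the one cited theorem. You were also right to make explicit that the determinant identity forces $\nu\in\mathbb{R}_+^{*n\downarrow}$ before invoking Klyachko; the paper leaves that implicit.
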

\begin{proof}
This follows easily from Theorem~\ref{thm:klyachko}.
Indeed, $K_{\lambda,\nu}$ is the set of all $\nu$ arising from sequences $\gamma$ satisfying the
conditions~\eqref{eq:detID} and~\eqref{eq:multHornIneq} of Theorem~\ref{thm:klyachko},
where the correspondence between $\nu$ and $\gamma$ is given by
$\nu_i=\gamma_{n+1-i}^{-1}$.
Thus,
$K_{\lambda,\mu}$ is the set of all $\nu\in\mathbb{R}_+^{n\downarrow}$ such that
\begin{equation}\label{eq:nudetID}
\prod_1^n\nu_i=\prod_1^n\lambda_i\mu_i
\end{equation}
and
\begin{equation}\label{eq:numultHornIneq}
\forall(I,J,K)\in\bigcup_{r=0}^n H(n,r),\quad
\prod_{i\in I}\lambda_i\prod_{j\in J}\mu_j\leq \prod_{k\in \overline K}\nu_k
\end{equation}
If $\nu\in\Kt_{\lambda,\mu}$, then~\eqref{eq:numultHornIneq} holds by definition.
To see that~\eqref{eq:nudetID} holds, we apply the inequality~\eqref{eq:Ktdef<} 
for the Horn triple $(\{1,\ldots ,n\},\{1,\ldots ,n\},\{1,\ldots ,n\}   )\in H(n,n)$
and the inequality~\eqref{eq:Ktdef>} for the Horn triple $(\varnothing,\varnothing,\varnothing)\in H(n,0)$.
This yields $\Kt_{\lambda,\mu}\subseteq K_{\lambda,\mu}$.

For the reverse inclusion, assume $\nu\in K_{\lambda,\nu}$.
Then~\eqref{eq:Ktdef<} is~\eqref{eq:numultHornIneq}, while~\eqref{eq:Ktdef>}
follows from the~\eqref{eq:Ktdef<} and the determinant identity~\eqref{eq:nudetID}.
Thus, $\nu\in\Kt_{\lambda,\nu}$.
\end{proof}

We are now able to prove the main result of this section:
\begin{thm}\label{thm:KKt}
For all $\lambda,\mu \in\mathbb{R}_+^{n\downarrow}$, we have
$K_{\lambda,\mu}=\Kt_{\lambda,\mu}$.
\end{thm}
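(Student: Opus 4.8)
The plan is to prove the two inclusions $\Kt_{\lambda,\mu}\subseteq K_{\lambda,\mu}$ and $K_{\lambda,\mu}\subseteq\Kt_{\lambda,\mu}$ separately, obtaining each as a limit of the invertible case handled in Lemma~\ref{lem:invertibleCase}. For the inclusion $K_{\lambda,\mu}\subseteq\Kt_{\lambda,\mu}$, I would argue directly from Theorem~\ref{thm:MnIneqs}: if $\nu$ consists of the singular values of $D=\diag(\lambda)U\diag(\mu)$, then $A=\diag(\lambda)U$ and $B=\diag(\mu)$ have singular values $\lambda$ and $\mu$, so the inequalities~\eqref{eq:Mnsineq<} and~\eqref{eq:Mnsineq>} hold with possible value $-\infty$; exponentiating them gives exactly~\eqref{eq:Ktdef<} and~\eqref{eq:Ktdef>} (with the usual convention $e^{-\infty}=0$, and noting that when some $\lambda_i$ or $\mu_j$ vanishes the left side of~\eqref{eq:Ktdef<} is $0$, so the inequality is automatic, and similarly the right side of~\eqref{eq:Ktdef>} can be $0$). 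So this direction is essentially immediate.

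The reverse inclusion $\Kt_{\lambda,\mu}\subseteq K_{\lambda,\mu}$ is the substantive part. Fix $\nu\in\Kt_{\lambda,\mu}$. I would first dispose of the easy sub-case where $\prod_i\lambda_i\mu_i>0$, i.e.\ all $\lambda_i,\mu_i>0$: then Lemma~\ref{lem:invertibleCase} applies directly once we check $\nu_k>0$ for all $k$, which follows by applying~\eqref{eq:Ktdef<} to a Horn triple of the form $(\{1,\dots,r\},\dots)$ forcing $\prod_{k\in\Kbar}\nu_k>0$, hence $\nu_n>0$. For the general case, suppose $\lambda$ has exactly $p$ strictly positive entries and $\mu$ has exactly $q$; write $\lambda^{(\eps)}=\lambda+\eps(1,\dots,1)$ and $\mu^{(\eps)}=\mu+\eps(1,\dots,1)$, which lie in $\mathbb{R}_+^{*n\downarrow}$. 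The idea is to produce, for each small $\eps>0$, a sequence $\nu^{(\eps)}\in\Kt_{\lambda^{(\eps)},\mu^{(\eps)}}=K_{\lambda^{(\eps)},\mu^{(\eps)}}$ with $\nu^{(\eps)}\to\nu$ as $\eps\to0$; then since $\diag(\lambda^{(\eps)})U_\eps\diag(\mu^{(\eps)})\to\diag(\lambda)U\diag(\mu)$ along a convergent subnet of the $U_\eps$ and singular values are norm-continuous, $\nu$ is realized and lies in $K_{\lambda,\mu}$.

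The crux is thus constructing $\nu^{(\eps)}$. The naive guess $\nu+\eps(1,\dots,1)$ need not satisfy the perturbed inequalities, and this is exactly the ``one subtlety'' the introduction warns about: $\nu$ may have fewer than $\min(p,q)$ nonzero entries, or the products may be rigidly constrained, so one must perturb the zero entries of $\nu$ in a controlled, possibly non-uniform way. Here I would invoke the interpolation result from~\cite{BLT09} referenced in the introduction: the set $\bigcup_\eps K_{\lambda^{(\eps)},\mu^{(\eps)}}$ and the target body $\Kt_{\lambda,\mu}$ are both described by the same finite family of product (equivalently, after taking logs, linear) inequalities indexed by $H(n,r)$, with continuously varying right-hand data; the interpolation lemma should guarantee that any $\nu$ satisfying the limiting system is approximable by points $\nu^{(\eps)}$ satisfying the $\eps$-systems. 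Concretely I expect to set up the problem on logarithmic coordinates, allowing $-\infty$, and check that the defining inequalities for $\Kt_{\lambda^{(\eps)},\mu^{(\eps)}}$ converge to those for $\Kt_{\lambda,\mu}$ in a sense strong enough to apply~\cite{BLT09}. Verifying the precise hypotheses of that interpolation result against our polytope-with-$-\infty$-coordinates setup — in particular handling the faces where some coordinates are forced to $0$ — is the step I expect to be the main obstacle; everything else (the two easy inclusions, the compactness/continuity argument closing the loop) is routine.
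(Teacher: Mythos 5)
Your roadmap matches the paper's: the inclusion $K_{\lambda,\mu}\subseteq\Kt_{\lambda,\mu}$ by exponentiating Theorem~\ref{thm:MnIneqs}, and the reverse by perturbing into the invertible regime of Lemma~\ref{lem:invertibleCase}, invoking the interpolation result of~\cite{BLT09}, and closing with a compactness/continuity limit. But the step you flag as ``the main obstacle'' is in fact the crux of the proof, and you have not found it. The paper's specific device is an asymmetric two-parameter perturbation: writing $\Kt^{+}_{\lambda,\mu}$ and $\Kt^{-}_{\lambda,\mu}$ for the sets cut out by the upper inequalities~\eqref{eq:Ktdef<} and the lower inequalities~\eqref{eq:Ktdef>} respectively, one observes that for $\nu\in\Kt_{\lambda,\mu}$ and any $\eps>0$, adding $\eps$ to every coordinate of $\nu$ makes each nontrivial upper inequality \emph{strict}, while adding $\eps$ to every coordinate of $\lambda$ and $\mu$ makes each nontrivial lower inequality \emph{strict}. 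Since there are only finitely many inequalities, there exists $\delta\in(0,\eps)$ with
\[
\nu+\eps\in\Kt^{+}_{\lambda+\delta,\mu+\delta}\quad\text{and}\quad\nu+\delta\in\Kt^{-}_{\lambda+\eps,\mu+\eps}.
\]
Now every entry of $\lambda+\delta,\mu+\delta,\nu+\eps,\lambda+\eps,\mu+\eps,\nu+\delta$ is strictly positive, so their logarithms are finite; one triple satisfies~\eqref{eq:AddHorn<} and the other~\eqref{eq:AddHorn>}, and they are componentwise ordered. This is exactly the hypothesis of Proposition~3.2 of~\cite{BLT09}, which produces a single triple $(\alpha,\beta,\rho)\in(\Reals^{n\downarrow})^3$ squeezed between them and satisfying \emph{all} the additive Horn inequalities. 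Exponentiating gives a strictly positive $(\lambdat_\eps,\mut_\eps,\nut_\eps)$ to which Lemma~\ref{lem:invertibleCase} applies.

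Two consequences for your write-up. First, your concern about ``handling the faces where some coordinates are forced to $0$'' and about applying the interpolation lemma to a ``polytope-with-$-\infty$-coordinates'' is a red herring: the perturbations happen before taking logarithms, so the interpolation is invoked only on finite data, precisely as stated in~\cite{BLT09}. Second, your description of what the interpolation lemma delivers (``any $\nu$ satisfying the limiting system is approximable by points $\nu^{(\eps)}$ satisfying the $\eps$-systems'') is not its actual content; it interpolates between a solution of the upper system and a solution of the lower system with componentwise bounds. The whole point of the $(\eps,\delta)$ trick is to manufacture those two bracketing solutions. Without this, your argument has a genuine gap; with it, it becomes the paper's proof.
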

\begin{proof}
The inclusion $K_{\lambda,\mu}\subseteq\Kt_{\lambda,\mu}$ follows from Theorem~\ref{thm:MnIneqs}
by exponentiating.

It will be convenient to have the notation, applicable for any $\lambda,\mu\in\mathbb{R}_+^{n\downarrow}$, that 
$\Kt_{\lambda,\mu}^+$ is the set of all $\nu\in\Reals_+^{n\downarrow}$ so that~\eqref{eq:Ktdef<} holds for all Horn triples,
and
$\Kt_{\lambda,\mu}^-$ is the set of all $\nu\in\Reals_+^{n\downarrow}$ so that~\eqref{eq:Ktdef>} holds for all Horn triples.
Thus,
\[
\Kt_{\lambda,\mu}=\Kt_{\lambda,\mu}^-\cap\Kt_{\lambda,\mu}^+.
\]
For $\eps\in\Reals_+$ and $\nu\in\Reals_+^{n\downarrow}$, let $\nu+\eps\in\Reals_+^{n\downarrow}$
be obtained from $\nu$ by adding $\eps$ to all $n$ coordinates.
Similarly, if $\nu\in\Reals_+^{*n\downarrow}$, then $\log\nu\in\Reals^{n\downarrow}$ is obtained by taking the
logarithm of each component.

Now, to show $\Kt_{\lambda,\mu}\subseteq K_{\lambda,\mu}$, let $\nu\in\Kt_{\lambda,\mu}$ and let $\eps>0$.
Then $\nu+\eps\in\Kt_{\lambda,\mu}^+$ and for all $(I,J,K)\in\bigcup_{r=0}^n H(n,r)$, the inequality
\[
\prod_{i\in I}\lambda_i\prod_{j\in J}\mu_j\leq \prod_{k\in \overline K}(\nu+\eps)_k
\]
holds with strict inequality, except when $(I,J,K)=(\varnothing,\varnothing,\varnothing)$, when it is the equality $1=1$.
Similarly, we have $\nu\in\Kt_{\lambda+\eps,\mu+\eps}^-$ and all of the inequalities
\[
\prod_{i\in I^c}(\lambda+\eps)_i\prod_{j\in J^c}(\mu+\eps)_j\geq \prod_{k\in\Kbar^c}\nu_k
\]
hold with strict inequality, except when $(I^c,J^c,K^c)=(\varnothing,\varnothing,\varnothing)$, when it is the equality $1=1$.
Therefore, there is $\delta=\delta(\eps)$ satisfying $0<\delta<\eps$ such that
\[
\nu+\eps\in\Kt_{\lambda+\delta,\mu+\delta}^+\quad\text{ and }\quad
\nu+\delta\in\Kt_{\lambda+\eps,\mu+\eps}^-.
\]
Now $(\log(\lambda+\delta),\log(\mu+\delta),\log(\nu+\eps))$ satisfies the additive Horn inequalities~\eqref{eq:AddHorn<}
while
$(\log(\lambda+\eps),\log(\mu+\eps),\log(\nu+\delta))$ satisfies the additive Horn inequalities~\eqref{eq:AddHorn>}.
By the interpolation result, Proposition~3.2 of~\cite{BLT09}, it follows that there is $(\alpha,\beta,\rho)\in(\Reals^{n\downarrow})^3$
satisfying all of the inequalities for the additive Horn problem and such that componentwise we have
\begin{equation}\label{eq:abcineqs}
\begin{gathered}
\log(\lambda+\delta)\le\alpha\le\log(\lambda+\eps), \\
\log(\mu+\delta)\le\beta\le\log(\mu+\eps), \\
\log(\nu+\delta)\le\rho\le\log(\nu+\eps).
\end{gathered}
\end{equation}
Thus, letting
\[
\lambdat_\eps=\exp(\alpha),\quad\mut_\eps=\exp(\beta),\quad\nut_\eps=\exp(\rho),
\]
we have
$\nut_\eps\in\Kt_{\lambdat_\eps,\mut_\eps}$ and, by Lemma~\ref{lem:invertibleCase}, $\nut_\eps\in K_{\lambdat_\eps,\mut_\eps}$.
So there is
an $n\times n$ unitary matrix $U_\eps$ so that the singular numbers of $\diag(\lambdat_\eps)U_\eps\diag(\mut_\eps)$ are
precisely $\nut_\eps$.
Of course, the inequalities~\eqref{eq:abcineqs} give, componentwise,
\[
\lambda+\delta(\eps)\le\lambdat_\eps\le\lambda+\eps,\quad
\mu+\delta(\eps)\le\mut_\eps\le\mu+\eps,\quad
\nu+\delta(\eps)\le\nut_\eps\le\nu+\eps.
\]
Since the singular numbers of an $n\times n$ matrix are continuous with respect to the operator norm,  choosing by compactness
a sequence $\eps(k)$ tending to zero so that $U_{\eps(k)}$ converges as $k\to\infty$ in norm to a unitary matrix $U$
and taking the limit as $k\to\infty$
we obtain that
the singular numbers of $\diag(\lambda)U\diag(\mu)$ are precisely $\nu$.
Thus, $\nu\in K_{\lambda,\mu}$, as required.
\end{proof}

\section{Preliminaries in finite von Neumann algebras}
\label{sec:preliminaries}

Throughout this section and the next, $\Mcal$ will denote a finite von Neumann algebra with a normal, faithful, tracial state $\tau$.
We will also assume $\Mcal$ is diffuse, meaning that it has no minimal projections.
Now we recall some facts about finite von Neumann algebras and introduce notation that is used in the remainder of the paper.
Some of this notation (for example, related to flags and singular number) is in conflict with the notation used
for matrix algebras in previous sections.

\subsection{Projections and flags}
We let $\Proj(\Mcal)$ denote the set of (self-adjoint) projections in $\Mcal$ and for $P\ne0$ such a projection, the cut-down
von Neumann algebra $P\Mcal P$ will usually be taken with the tracial state $\tau(\cdot)/\tau(P)$.

Recall that for $P,Q\in\Proj(\Mcal)$, their greatest lower bound $P\wedge Q\in\Proj(\Mcal)$
has trace satisfying
\[
\tau(P\wedge Q)\ge\tau(P)+\tau(Q)-1.
\]
By relativising, we obtain the following easy but useful result.
\begin{lemma}\label{lem:EFP}
Let $E,F,P\in\Proj(\Mcal)$ with $E\le F$.
Then
\begin{equation}\label{eq:EFP}
\tau(P\wedge E)\ge\tau(P\wedge F)-\tau(F-E).
\end{equation}
\end{lemma}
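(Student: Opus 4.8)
The plan is to deduce this by \emph{relativising}: apply the basic bound $\tau(P'\wedge Q')\ge\tau(P')+\tau(Q')-1$ not in $\Mcal$ but inside the cut-down algebra $F\Mcal F$. First I would dispose of the trivial case $F=0$: then $E=0$ as well, both sides of~\eqref{eq:EFP} vanish, and there is nothing to prove. So assume $F\ne0$ and regard $F\Mcal F$ as a finite von Neumann algebra equipped with the normal, faithful, tracial state $\tau_F(\cdot)=\tau(\cdot)/\tau(F)$, which is the normalization fixed above for cut-down algebras.

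Next I would identify the correct pair of projections in $F\Mcal F$. Since $E\le F$, the projection $E$ lies in $F\Mcal F$; likewise $P\wedge F\le F$ lies in $F\Mcal F$. The one point needing a word of care is that the lattice meet of two projections dominated by $F$ is the same computed in $\Mcal$ or in $F\Mcal F$; granting this and using $E\le F$, one has
\[
E\wedge(P\wedge F)=E\wedge P\wedge F=E\wedge P,
\]
the last equality because $E\wedge P\le E\le F$, so meeting with $F$ changes nothing. Applying the basic estimate in $F\Mcal F$ to the projections $E$ and $P\wedge F$ gives
\[
\tau_F\big(E\wedge(P\wedge F)\big)\ge\tau_F(E)+\tau_F(P\wedge F)-1,
\]
and multiplying through by $\tau(F)$ together with the previous display yields
\[
\tau(P\wedge E)\ge\tau(E)+\tau(P\wedge F)-\tau(F)=\tau(P\wedge F)-\big(\tau(F)-\tau(E)\big).
\]
Since $E\le F$ we have $\tau(F)-\tau(E)=\tau(F-E)$, and this is exactly~\eqref{eq:EFP}.

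I do not expect any genuine obstacle; this is precisely the ``relativising'' alluded to in the statement, and it is indeed easy. The only item to verify carefully is the interchangeability of the meet computed in $\Mcal$ with that computed in $F\Mcal F$ — which holds because $E\wedge P$, being below $E\le F$, already belongs to $F\Mcal F$ and is the largest projection there lying below both $E$ and $P\wedge F$ — after which the argument is just bookkeeping with the trace normalization.
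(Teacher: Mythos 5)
Your proof is correct and takes essentially the same approach as the paper: relativise to $F\Mcal F$, apply the basic bound $\tau(P'\wedge Q')\ge\tau(P')+\tau(Q')-1$ there to the projections $E$ and $P\wedge F$, use the identity $P\wedge E=(P\wedge F)\wedge E$, and multiply through by $\tau(F)$. The extra care you take with the $F=0$ case and with checking that the meet agrees in $\Mcal$ and $F\Mcal F$ is sound but was left implicit in the paper.
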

\begin{proof}
Working in $F\Mcal F$, since $P\wedge E =(P\wedge F)\wedge E$, we have
\[
\tau^{(F\Mcal F)}(P\wedge E)\ge\tau^{(F\Mcal F)}(P\wedge F)+\tau^{(F\Mcal F)}(E)-1
\]
and multiplying by $\tau(F)$ yields~\eqref{eq:EFP}.
\end{proof}

A {\em flag} $E$ in $\Mcal$ will be a function $E:D\to\Proj(\Mcal)$ for some subset $D\subseteq[0,1]$
so that
for all $s,t\in D$ with $s\le t$, we have
$\tau(E(t))=t$ and $E(s)\le E(t)$.
A {\em full flag} is a flag whose domain $D$ is $[0,1]$.

If $T\in\Mcal$ and
$P\in\Proj(\Mcal)$, then we let $T\cdot P$ denote the range projection of $TP$.
The following properties are well known and easy to prove
(see, for example, \S2.2 of \cite{CD09} for this and more).
\begin{prop}
Let $S,T\in\Mcal$ 
and let $P,Q\in\Proj(\Mcal)$.
Then
\begin{enumerate}[(i)]
\item $S\cdot(T\cdot P)=(ST)\cdot P$;
\item in general, $\tau(T\cdot P)\le\tau(P)$,
while if $T$ is invertible or, more generally, if $T$ has zero kernel, then $\tau(T\cdot P)=\tau(P)$;
\item $T\cdot(P\wedge Q)=(T\cdot P)\wedge(T\cdot Q)$.
\end{enumerate}
\end{prop}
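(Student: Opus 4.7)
The plan is to handle each of the three items via direct manipulations of ranges and closures, together with the standard fact that in the finite algebra $\Mcal$ the left support $\ell(X)$ (the range projection of $X$) and the right support $r(X)$ (the projection onto $\overline{X^{\ast}\mathcal{H}}$) of any $X\in\Mcal$ are Murray--von Neumann equivalent, so that $\tau(\ell(X))=\tau(r(X))$.

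For (i), I would simply unfold the definition of $T\cdot P$ as the projection onto $\overline{T(P\mathcal{H})}$. Then $S\cdot(T\cdot P)$ is the projection onto $\overline{S\bigl(\overline{T(P\mathcal{H})}\bigr)}=\overline{(ST)(P\mathcal{H})}$, which is $(ST)\cdot P$ by definition.

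For (ii), observe that $T\cdot P=\ell(TP)$, while $r(TP)\le P$ since $\overline{(TP)^{\ast}\mathcal{H}}\subseteq P\mathcal{H}$. Then $\tau(T\cdot P)=\tau(\ell(TP))=\tau(r(TP))\le\tau(P)$. When $\ker T=0$, any $v\in P\mathcal{H}$ with $TPv=Tv=0$ must be zero, so $r(TP)=P$ and the inequality becomes an equality.

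Part (iii) is the step I expect to require the most care, and is the main obstacle. When $T$ is invertible, $T$ and $T^{-1}$ are bijections of $\mathcal{H}$ that preserve closed subspaces and their intersections, so $T\bigl(P\mathcal{H}\cap Q\mathcal{H}\bigr)=T(P\mathcal{H})\cap T(Q\mathcal{H})$ and the equality follows upon taking range projections. For general $T$ I would use the polar decomposition $T=U|T|$ together with the fact that, in a finite von Neumann algebra, $U$ extends to a unitary via Murray--von Neumann equivalence of $1-U^{\ast}U$ and $1-UU^{\ast}$, reducing the question to $T\ge0$. I would then approximate by $T_{\eps}=T+\eps\,1\ge\eps\,1$, which is invertible, and pass $\eps\downarrow0$, combining this with (i) and (ii) to match both sides. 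The delicate point is that meets of projections are not continuous in norm, so this continuity step would need to be supplemented by a spectral-calculus argument for $|T|$ and by a lower semicontinuity property of $\tau(\cdot\wedge\cdot)$ to conclude.
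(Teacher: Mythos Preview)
The paper does not actually prove this proposition; it only declares the properties ``well known and easy to prove'' and points to \S2.2 of~\cite{CD09}. So there is nothing to compare against line by line. Your arguments for (i) and (ii) are correct and standard; for (i) you are implicitly using that $\overline{S(\overline{V})}=\overline{S(V)}$ for bounded $S$, which is immediate from continuity.

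For (iii), however, your unease about the limiting step is well founded: the statement as written, for arbitrary $T\in\Mcal$, is \emph{false}, so no approximation argument can rescue it. In $\Mcal=M_2(\Cpx)\otimes N$ with $N$ any II$_1$-factor, take
\[
T=\begin{pmatrix}1&1\\0&0\end{pmatrix}\otimes 1,\qquad
P=\begin{pmatrix}1&0\\0&0\end{pmatrix}\otimes 1,\qquad
Q=\begin{pmatrix}0&0\\0&1\end{pmatrix}\otimes 1.
\]
Then $P\wedge Q=0$, so $T\cdot(P\wedge Q)=0$, while $T\cdot P=T\cdot Q=P$, giving $(T\cdot P)\wedge(T\cdot Q)=P\ne0$. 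Thus the ``delicate'' continuity step in your plan is not merely delicate --- it fails outright, and your instinct that meets of projections misbehave in the limit was the right signal.

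What the paper actually uses (in the proof of Theorem~\ref{thm:ABC}) is only the case where $T$ is invertible, and your first paragraph already handles that. More generally, (iii) holds whenever $T$ has zero kernel, and there is a clean direct argument that avoids any approximation. One always has $T\cdot(P\wedge Q)\le(T\cdot P)\wedge(T\cdot Q)$. The identity $T\cdot(P\vee Q)=(T\cdot P)\vee(T\cdot Q)$ holds for \emph{all} $T$ (closures of sums behave well). Combining this with your part (ii) and the parallelogram law $\tau(R\wedge S)+\tau(R\vee S)=\tau(R)+\tau(S)$ gives
\[
\tau\bigl((T\cdot P)\wedge(T\cdot Q)\bigr)=\tau(P)+\tau(Q)-\tau(P\vee Q)=\tau(P\wedge Q)=\tau\bigl(T\cdot(P\wedge Q)\bigr),
\]
so faithfulness of $\tau$ upgrades the inequality to equality. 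This is presumably what the cited reference has in mind, and it replaces your proposed $\eps\downarrow0$ limit entirely.
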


\subsection{Singular numbers and eigenvalue functions}
The singular numbers in the setting of a finite von Neumann algebra were introduced by von Neumann.
See Fack and Kosaki~\cite{FaKo86} for an excellent presentation and development of this theory.
For an element $A\in\Mcal$, the singular number function $s_A:[0,1]\to[0,\infty)$ is the right-continuous, nonincreasing function
defined by
\begin{equation}\label{eq:snums}
s_A(t)=\inf\{\|A(1-Q)\|\mid Q\in\Proj(\Mcal),\,\tau(Q)\le t\}.
\end{equation}
We may also write $s_A^{(\Mcal)}(t)$ if we want to indicate the von Neumann algebra.
Thus, for example, if $P\in\Proj(\Mcal)$ and $A\in P\Mcal P$, then we have
\begin{equation}\label{eq:sPMP}
s_A^{(\Mcal)}(t)=\begin{cases}
s_A^{(P\Mcal P)}(t/\tau(P)),&0\le t<\tau(P), \\
0,&\tau(P)\le t\le1.
\end{cases}
\end{equation}

Note that we have $s_A=s_{A^*}=s_{|A|}$ where $|A|=(A^*A)^{1/2}$.

For a self-adjoint $T\in\Mcal$, its {\em spectral distribution} $\mu_T$ is the Borel probability measure supported
on the spectrum of $T$ whose moments agree with those of $T$ (with respect to $\tau$).
It is also equal to $\tau$ composed with the projection valued spectral measure of $T$.

The {\em eigenvalue function} of $T$ 
is the nonincreasing, right-continuous function
$\lambda_T:[0,1)\to\Reals$ given by 
\[
\lambda_T(t)=\sup\{x\in\Reals\mid\mu_T((x,\infty))>t\}.
\]
There is a full flag $E_T$ of projections in $\Mcal$, that can be obtained by starting with a chain
of spectral projections of $T$ and extending in the case that the distribution $\mu_T$ has atoms,
so that
\[
T=\int_0^1\lambda_T(t)\,dE_T(t).
\]
We will call $E_T$ a {\em spectral flag} of $T$,
and the possible nonuniqueness of spectral flags will not concern us.
We note that, for $T\ge0$  $\lambda_T=s_T$ and all $t\in[0,1]$, we have
\begin{gather}
\|T(1-E_T(t))\|=s_T(t) \label{eq:Esnum} \\[1ex]
TE_T(t)\ge s_T(t)E_T(t). \label{eq:sEunder}
\end{gather}

\subsection{Fuglede--Kadison determinant}

The Fuglede--Kadison determinant~\cite{FuKa52} is the function $\Delta:\Mcal\to[0,\infty)$
defined by $\Delta(T)=\Delta(|T|)=\exp\tau(\log|T|)$ for $T$ invertible and
$\Delta(T)=\lim_{\eps\to0^+}\Delta(|T|+\eps)$ for $T$ non-invertible,
and it satisfies, for all $S,T\in\Mcal$,
$\Delta(ST)=\Delta(S)\Delta(T)$.
We may also write $\Delta^{(\Mcal)}(T)$ for $\Delta(T)$, to emphasize the von Neumann algebra (and, implicitly, the trace)
with respect to which the Fuglede--Kadison determinant is taken.

\subsection{The intersection property in II$_1$-factors}
\label{subsec:IP}

Given $n\in\Nats$ and a set $I=\{i(1),\ldots,i(r)\}$ with $1\le i(1)<i(2)\cdots<i(r)\le n$,
and given a flag $E$ in $\Mcal$ whose domain includes the rational numbers
\begin{equation}\label{eq:ratsn}
\left\{0,\frac1n,\frac2n,\ldots,\frac{n-1}n,1\right\},
\end{equation}
we consider the corresponding {\em Schubert variety} $S(E,I)$.
It is the set of all projections $P\in\Mcal$ satisfying $\tau(P)=r/n$ and, for all $\ell\in\{1,\ldots,r\}$,
\[
\tau\left(P\wedge E\left(\frac{i(\ell)}n\right)\right)\ge\frac\ell n.
\]

Given  subsets $I$, $J$ and $K$ of $\{1,\ldots,n\}$, each of cardinality $r$, we say the triple $(I,J,K)$
has the {\em intersection property} in $\Mcal$ if,
whenever $E$, $F$ and $G$ are flags in $\Mcal$ each of whose domains includes
the rational numbers~\eqref{eq:ratsn},
there is a projection $P\in S(E,I)\cap S(F,J)\cap S(G,K)$.
A main result of~\cite{BCDLT10} is that every $(I,J,K)\in H(n,r)$
has the intersection property in every II$_1$-factor $\Mcal$.

\section{Singular numbers of products in finite von Neumann algebras}
\label{sec:sing}

In this section, we prove inequalities for singular numbers of products in finite von Neumann algebras
that generalize those proved for matrix algebras in Section~\ref{sec:singMat}
(and whose proofs are also analogous).
Note that our results and techniques overlap with and extend those of Harada~\cite{H07}.

For the next two lemmas, we fix $n\in\Nats$ and $r\in\{1,\ldots,n\}$ and $I=\{i(1),\ldots,i(r)\}\subseteq\{1,\ldots,n\}$
such that $i(j)<i(j+1)$.
Consider the corresponding union of subintervals of $[0,1]$
\begin{equation}\label{eq:FI}
F_I=\bigcup_{\ell=1}^r\left[\frac{i(\ell)-1}n,\frac{i(\ell)}n\right]\subseteq[0,1]
\end{equation}
and let $m$ denote Lebesgue measure on $[0,1]$.
\begin{lemma}\label{lem:PEx}
Suppose $P\in\Proj(\Mcal)$ and $E$ is a full flag in $\Mcal$ and
\begin{equation}\label{eq:PFil}
\forall\ell\in\{1,\ldots,r\},\quad\tau\left(P\wedge E\left(\frac{i(\ell)}n\right)\right)\ge\frac\ell n.
\end{equation}
Then
\begin{equation}\label{eq:tauPEx}
\forall x\in[0,1],\quad\tau(P\wedge E(x))\ge m([0,x]\cap F_I).
\end{equation}
\end{lemma}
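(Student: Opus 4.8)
The plan is a direct case analysis on the location of $x$ relative to the $r$ subintervals making up $F_I$, using only monotonicity of the flag $E$ together with the relativised submodularity estimate of Lemma~\ref{lem:EFP}. No deeper input (e.g.\ Littlewood--Richardson combinatorics) is needed here.

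If $x<(i(1)-1)/n$, then $[0,x]\cap F_I=\varnothing$, so the right-hand side of~\eqref{eq:tauPEx} is $0$ and there is nothing to prove. Otherwise, let $\ell$ be the largest index in $\{1,\ldots,r\}$ with $(i(\ell)-1)/n\le x$. Since $i(\cdot)$ is strictly increasing, $i(j)\le i(\ell)-1$ whenever $j<\ell$, so each subinterval $[(i(j)-1)/n,i(j)/n]$ with $j<\ell$ lies in $[0,x]$; and by maximality of $\ell$ each such subinterval with $j>\ell$ is disjoint from $[0,x]$. In the first case $x\ge i(\ell)/n$: then the $\ell$-th subinterval also lies in $[0,x]$, so $m([0,x]\cap F_I)=\ell/n$; and since $x\ge i(\ell)/n$ gives $E(x)\ge E(i(\ell)/n)$, hence $P\wedge E(x)\ge P\wedge E(i(\ell)/n)$, the hypothesis~\eqref{eq:PFil} yields
\[
\tau(P\wedge E(x))\ge\tau\left(P\wedge E\left(\frac{i(\ell)}n\right)\right)\ge\frac\ell n .
\]
In the second case $(i(\ell)-1)/n\le x<i(\ell)/n$: the $\ell$-th subinterval contributes the partial length $x-(i(\ell)-1)/n$ to $[0,x]\cap F_I$, so $m([0,x]\cap F_I)=(\ell-1)/n+x-(i(\ell)-1)/n$. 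Applying Lemma~\ref{lem:EFP} to the pair $E(x)\le E(i(\ell)/n)$, and using $\tau(E(i(\ell)/n)-E(x))=i(\ell)/n-x$ (valid since $\tau(E(t))=t$ for a full flag) together with~\eqref{eq:PFil},
\[
\tau(P\wedge E(x))\ge\frac\ell n-\left(\frac{i(\ell)}n-x\right)=\frac{\ell-1}n+x-\frac{i(\ell)-1}n=m([0,x]\cap F_I).
\]

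The argument is entirely elementary, and I foresee no genuine obstacle; the only point requiring a little care is the Lebesgue-measure bookkeeping, where one must use the strict monotonicity of $i(\cdot)$ (so that the ``earlier'' subintervals of $F_I$ are indeed counted with their full length $1/n$) and the defining equality $\tau(E(t))=t$. If desired, the two cases can be packaged into the single estimate $\tau(P\wedge E(x))\ge\min\{\ell/n,\ \ell/n-i(\ell)/n+x\}$, but keeping them separate is more transparent.
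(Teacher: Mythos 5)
Your proof is correct and uses essentially the same approach as the paper: both rely on Lemma~\ref{lem:EFP} applied to the pair $E(x)\le E(i(\ell)/n)$ for $x$ in the $\ell$-th subinterval. The paper is slightly more compact in that it first invokes the monotonicity of the left-hand side and the constancy of the right-hand side off $F_I$ to reduce immediately to $x\in[(i(\ell)-1)/n,\,i(\ell)/n]$, whereas you work through the remaining cases explicitly; the core estimate is identical.
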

\begin{proof}
Since the left hand side is increasing in $x$ and the right hand side is constant when $x$ varies
over intervals disjoint from $F_I$, we need only prove~\eqref{eq:tauPEx} for $x\in\big[\frac{i(\ell)-1}n,\frac{i(\ell)}n\big]$,
$\ell\in\{1,\ldots,r\}$.
For such $x$, using Lemma~\ref{lem:EFP} and the hypothesis~\eqref{eq:PFil}, we get
\begin{align*}
\tau(P\wedge E(x))&\ge\tau\left(P\wedge E\left(\frac{i(\ell)}n\right)\right)
  -\tau\left(E\left(\frac{i(\ell)}n\right)-E(x)\right)\ge\frac\ell n-\frac{i(\ell)-x}n \\[1ex]
&=m\left(\left[0,\frac{i(\ell)}n\right]\cap F_I\right)-m\left(\left[x,\frac{i(\ell)}n\right]\cap F_I\right)=m([0,x]\cap F_I).
\end{align*}
\end{proof}

\begin{lemma}\label{lem:FKPAP}
Let $A\in\Mcal$, and let $E=E_{|A|}$ be a spectral flag of $|A|=(A^*A)^{1/2}$.
Let $P\in\Proj(\Mcal)$ be such that $\tau(P)=r/n$ and
\[
\forall\ell\in\{1,\ldots,r\},\quad\tau\left(P\wedge E\left(\frac{i(\ell)}n\right)\right)\ge\frac\ell n.
\]
Let $Q=A\cdot P$ and let $W\in\Mcal$ be any partial isometry such that $W^*W\ge Q$ and $WW^*=P$.
Then
\begin{equation}\label{eq:FKPAP}
\log\Delta^{(P\Mcal P)}(WAP)\ge\frac1{\tau(P)}\int_{F_I}\log(s_A(t))\,dt.
\end{equation}
\end{lemma}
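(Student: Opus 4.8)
The plan is to compare, inside $\Mcal$, the singular number function of $WAP$ with that of $A$ at matching parameters, and then to integrate. First I would record two preliminary observations. Since $W$ is a partial isometry with $W^*W\ge Q=A\cdot P$, and $Q$ is the range projection of $AP$, the projection $W^*W$ acts as the identity on the range of $Q$, so $W^*W\,AP=AP$; hence $WAP\in P\Mcal P$ and $\|WAPy\|=\|APy\|=\|Ay\|$ for every vector $y$ in the range of $P$. Moreover, by the definition of the Fuglede--Kadison determinant together with~\eqref{eq:sPMP} applied to $WAP\in P\Mcal P$ (recall $\tau(P)=r/n$),
\[
\log\Delta^{(P\Mcal P)}(WAP)=\frac1{\tau(P)}\int_0^{\tau(P)}\log s^{(\Mcal)}_{WAP}(t)\,dt ,
\]
so, since $m(F_I)=r/n=\tau(P)$, the desired inequality~\eqref{eq:FKPAP} is equivalent to
\[
\int_0^{\tau(P)}\log s^{(\Mcal)}_{WAP}(t)\,dt\;\ge\;\int_{F_I}\log s_A(t)\,dt .
\]

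The heart of the argument is the pointwise estimate: \emph{for every $x\in[0,1)$ and every $t$ with $0\le t<\tau\bigl(P\wedge E(x)\bigr)$ one has $s^{(\Mcal)}_{WAP}(t)\ge s_A(x)$.} To prove it, set $R=P\wedge E(x)$. For a unit vector $y$ in the range of $R$ we have $E(x)y=y$, so $|A|E(x)\ge s_A(x)E(x)$ (which is~\eqref{eq:sEunder} for $|A|$) gives $\langle|A|y,y\rangle\ge s_A(x)$, and Cauchy--Schwarz then yields $\|Ay\|=\bigl\| |A|y \bigr\|\ge s_A(x)$; by the first paragraph, $\|WAPy\|\ge s_A(x)$. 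Now let $Q_0\in\Proj(\Mcal)$ with $\tau(Q_0)\le t$. Since $\tau\bigl(R\wedge(1-Q_0)\bigr)\ge\tau(R)-\tau(Q_0)>0$, this projection contains a unit vector $y$, and as $(1-Q_0)y=y$ we get $\|WAP(1-Q_0)\|\ge\|WAPy\|\ge s_A(x)$. Taking the infimum over $Q_0$ proves the estimate. Inserting Lemma~\ref{lem:PEx} (which gives $\tau(P\wedge E(x))\ge m([0,x]\cap F_I)$) we conclude
\[
s^{(\Mcal)}_{WAP}(t)\ge s_A(x)\qquad\text{whenever } t<m\bigl([0,x]\cap F_I\bigr).
\]

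It then remains to integrate this. Write $g(x)=m([0,x]\cap F_I)$ and let $\psi\colon[0,\tau(P))\to F_I$ be the increasing, measure-preserving parametrization of $F_I$ inverse to $g|_{F_I}$, namely $\psi(t)=t+\tfrac{i(\ell)-\ell}{n}$ for $t\in\bigl[\tfrac{\ell-1}{n},\tfrac{\ell}{n}\bigr)$, $\ell=1,\dots,r$. The displayed estimate shows that for each $c>0$ the set $\{t\in[0,\tau(P)):s^{(\Mcal)}_{WAP}(t)>c\}$ has at least the Lebesgue measure of $\{t\in[0,\tau(P)):s_A(\psi(t))>c\}$; since $s^{(\Mcal)}_{WAP}$ and $s_A\circ\psi$ are both nonincreasing on $[0,\tau(P))$, a comparison of super-level sets forces $s^{(\Mcal)}_{WAP}(t)\ge s_A(\psi(t))$ for almost every $t$. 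Integrating the logarithms over $[0,\tau(P))$ and then changing variables $u=\psi(t)$ on each of the $r$ subintervals turns $\int_0^{\tau(P)}\log s_A(\psi(t))\,dt$ into $\int_{F_I}\log s_A(u)\,du$, which with the first paragraph gives~\eqref{eq:FKPAP}; the possibility that $s_A$ vanishes on part of $F_I$ is harmless, as then the right-hand side of~\eqref{eq:FKPAP} equals $-\infty$. I expect the only genuinely delicate point to be this last passage, namely keeping careful track of the strict inequality $t<g(x)$ and of the interval endpoints so that no measure is lost when matching $\int_0^{\tau(P)}s^{(\Mcal)}_{WAP}$ against $\int_{F_I}s_A$; the pointwise estimate itself is a direct transcription of the linear-algebra argument behind Lemma~\ref{lem:detsnum}.
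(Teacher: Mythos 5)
Your proof is correct and follows essentially the same route as the paper's: both reduce $\log\Delta^{(P\Mcal P)}(WAP)$ to an integral of a singular-number function over a reparametrized copy of $F_I$, both obtain the key pointwise comparison by combining Lemma~\ref{lem:PEx} with~\eqref{eq:sEunder} and a vector in the range of a nonzero intersection projection, and both then change variables back to $F_I$. The only cosmetic differences are that you work with $s^{(\Mcal)}_{WAP}$ rather than $s^{(P\Mcal P)}_{WAP}$ (equivalent by~\eqref{eq:sPMP}) and you dispose of the boundary issue via a super-level-set comparison where the paper uses an $\eps$-shift together with right-continuity of $s_A$.
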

\begin{proof}
By re-indexing the integrand and using~\eqref{eq:sPMP}, we get
\begin{align*}
\log\Delta^{(P\Mcal P)}(WAP)&=\int_{[0,1]}\log s^{(P\Mcal P)}_{WAP}(x)\,dx \\
&=\frac1{m(F_I)}\int_{F_I}\log s^{(P\Mcal P)}_{WAP}\left(\frac{m([0,t]\cap F_I)}{m(F_I)}\right)\,dt
\end{align*}
We will now prove that
\begin{equation}\label{eq:sWAP}
s^{(P\Mcal P)}_{WAP}\left(\frac{m([0,t]\cap F_I)}{m(F_I)}\right)\ge s_A^{(\Mcal)}(t)
\end{equation}
holds for almost all $t\in F_I$,
which will yield the desired inequality~\eqref{eq:FKPAP}.
We take $t\in F_I\backslash\partial F_I$ and we will show that for all $\eps>0$, we have
\begin{equation}\label{eq:sWAPeps}
s^{(P\Mcal P)}_{WAP}\left(\frac{m([0,t]\cap F_I)}{m(F_I)}\right)\ge s_A^{(\Mcal)}(t+\eps),
\end{equation}
which by right-continuity of $s_A$ will imply~\eqref{eq:sWAP}.
Suppose $Q\le P$ is a projection and $\tau(Q)\le m([0,t]\cap F_I)$.
By Lemma~\ref{lem:PEx}, we have 
\[
\tau(P\wedge E(t+\eps))\ge m([0,t+\eps]\cap F_I),
\]
and so, by Lemma~\ref{lem:EFP}, we have
\begin{multline*}
\tau(E(t+\eps)\wedge(P-Q))\ge\tau(E(t+\eps)\wedge P)-\tau(Q) \\
\ge  m([0,t+\eps]\cap F_I)-m([0,t]\cap F_I)=m([t,t+\eps]\cap F_I)>0.
\end{multline*}
By applying the operator $WA(P-Q)$ to a vector belonging to the range of
the projection $E(t+\eps)\wedge(P-Q))$ and using~\eqref{eq:sEunder}, we obtain $\|WA(P-Q)\|\ge s_A(t+\eps)$.
Hence, we have
\begin{multline*}
s^{(P\Mcal P)}_{WAP}\left(\frac{m([0,t]\cap F_I)}{m(F_I)}\right) \\[1ex]
=\inf\{\|WA(P-Q)\|\mid Q\in\Proj(P\Mcal P),\,\tau(Q)\le m([0,t]\cap F_I)\} \\
\ge s_A(t+\eps)
\end{multline*}
and~\eqref{eq:sWAPeps} is proved.
\end{proof}

The next results apply to Horn triples $(I,J,K)\in H(n,r)$ as described in~\S\ref{subsec:HornTrips},
using that the triple has the intersection property in every II$_1$-factor, as described in~\S\ref{subsec:IP}.

\begin{thm}\label{thm:ABC}
Let $A,B,C\in\Mcal$ be such that $ABC=1$.
For $r,n\in\Nats$ with $r\le n$, and for $(I,J,K)\in H(n,r)$,
we have
\begin{equation}\label{eq:intlogs}
\int_{F_I}\log s_A
+\int_{F_J}\log s_B
+\int_{F_K}\log s_C\le0,
\end{equation}
where $F_I$, $F_J$ and $F_K$ are the corresponding unions of subintervals of $[0,1]$ as defined in~\eqref{eq:FI}
and where the integrals are with respect to Lebesgue measure.
\end{thm}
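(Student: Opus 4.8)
The plan is to run the argument of Proposition~\ref{prop:ABCMn} in the finite von Neumann algebra setting, with the Fuglede--Kadison determinant $\Delta$ in place of $\tdet$, Lemma~\ref{lem:FKPAP} in place of Lemma~\ref{lem:detsnum}, and the intersection property in II$_1$-factors from \S\ref{subsec:IP} in place of its $M_n(\Cpx)$ counterpart. If $r=0$ then $F_I=F_J=F_K=\varnothing$ and \eqref{eq:intlogs} is the trivial inequality $0\le0$, so we may assume $r\ge1$. Since $ABC=1$ in a finite von Neumann algebra, $A$, $B$ and $C$ are all invertible. We may moreover assume $\Mcal$ is a II$_1$-factor: embedding $\Mcal$ trace-preservingly into a II$_1$-factor (for instance, via a free product with the hyperfinite II$_1$-factor) leaves the singular number functions $s_A,s_B,s_C$, the relation $ABC=1$, and hence both sides of \eqref{eq:intlogs} unchanged.

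Fix spectral flags $E=E_{|A|}$, $F=E_{|B|}$ and $G=E_{|C|}$. Because $A$, $B$, $C$ are invertible, the operations $X\mapsto X\cdot(\cdot)$ preserve traces of projections and are order preserving, so $t\mapsto (C^{-1}B^{-1})\cdot E(t)$ and $t\mapsto C^{-1}\cdot F(t)$ are again full flags. Applying the intersection property to the three flags $(C^{-1}B^{-1})\cdot E$, $C^{-1}\cdot F$ and $G$ yields a projection $P$ with $\tau(P)=r/n$ lying in the three Schubert varieties $S(\cdot,I)$, $S(\cdot,J)$ and $S(\cdot,K)$. Set $Q=C\cdot P$ and $R=(BC)\cdot P=B\cdot Q$; then $\tau(Q)=\tau(R)=r/n$. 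Using parts (i)--(iii) of the proposition on $T\cdot P$ together with $(BC)\cdot\big((C^{-1}B^{-1})\cdot E(t)\big)=E(t)$ and $C\cdot\big(C^{-1}\cdot F(t)\big)=F(t)$, membership of $P$ in these Schubert varieties translates into $\tau(R\wedge E(i(\ell)/n))\ge\ell/n$, $\tau(Q\wedge F(j(\ell)/n))\ge\ell/n$ and $\tau(P\wedge G(k(\ell)/n))\ge\ell/n$ for $1\le\ell\le r$.

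Next I would choose partial isometries $W_{Q,P},W_{R,P}\in\Mcal$ with $W_{Q,P}^*W_{Q,P}=P=W_{R,P}^*W_{R,P}$, $W_{Q,P}W_{Q,P}^*=Q$ and $W_{R,P}W_{R,P}^*=R$ (obtained from the polar decompositions of $CP$ and $(BC)P$). As in the matrix proof, invertibility of $A$ and $ABC=1$ give $A\cdot R=P$, hence $AR=PAR$, and one verifies the identity
\[
P=(PAW_{R,P})\,(W_{R,P}^*BW_{Q,P})\,(W_{Q,P}^*CP)
\]
in $P\Mcal P$ (expand the right side to $PARBQCP$ and collapse using $QCP=CP$, $RBCP=BCP$ and $ABCP=P$). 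Applying $\Delta^{(P\Mcal P)}$ and its multiplicativity presents $1$ as a product of three Fuglede--Kadison determinants, which via the trace-preserving corner isomorphisms $R\Mcal R\cong P\Mcal P\cong Q\Mcal Q$ implemented by the $W$'s equal $\Delta^{(R\Mcal R)}(W_{R,P}AR)$, $\Delta^{(Q\Mcal Q)}(W_{Q,P}W_{R,P}^*BQ)$ and $\Delta^{(P\Mcal P)}(W_{Q,P}^*CP)$. Lemma~\ref{lem:FKPAP}, applied with $I$, $J$, $K$ respectively to $(A,R)$, $(B,Q)$, $(C,P)$ — its wedge hypotheses being exactly the three inequalities found above — bounds these below by $\frac nr\int_{F_I}\log s_A$, $\frac nr\int_{F_J}\log s_B$ and $\frac nr\int_{F_K}\log s_C$. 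Taking logarithms of the product identity and dividing by $n/r>0$ yields \eqref{eq:intlogs}.

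The genuine content here is borrowed: Lemma~\ref{lem:FKPAP} and the intersection property do the work, exactly as Lemma~\ref{lem:detsnum} and its matrix counterpart do in Proposition~\ref{prop:ABCMn}. The remaining effort is bookkeeping — checking that the Schubert conditions transport correctly under $X\mapsto X\cdot(\cdot)$, that $A\cdot R=P$, and that the factorization of $P$ in $P\Mcal P$ is valid — all of which rely on the invertibility of $A$, $B$, $C$. The one step that is not purely formal is the reduction to the factor case, needed because the intersection property was recorded only for II$_1$-factors; a trace-preserving embedding into a factor disposes of it.
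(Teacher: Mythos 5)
Your proof is correct and follows the paper's argument essentially verbatim: reduce to a II$_1$-factor by trace-preserving embedding, pull back the spectral flags by $(BC)^{-1}$ and $C^{-1}$, apply the intersection property to obtain $P$, push forward to $Q=C\cdot P$ and $R=BC\cdot P$, factor $P$ through the partial isometries, and apply Lemma~\ref{lem:FKPAP} three times. (Incidentally, your writing $W_{R,P}AR$ rather than $W_{R,P}AP$ in the $R\Mcal R$ corner is the correct form for invoking Lemma~\ref{lem:FKPAP}; the paper's display has a small typo there.)
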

\begin{proof}
Since every finite von Neumann algebra with specified normal, faithful, tracial state can be embedded into a II$_1$-factor
in a trace-preserving way (see, e.g., Prop.~8.1 of~\cite{BCDLT10}), we may without loss of generality assume that $\Mcal$ is a II$_1$-factor.

Consider the full flags
\[
E=(BC)^{-1}\cdot E_{|A|},\quad F=C^{-1}\cdot E_{|B|},\quad G=E_{|C|}.
\]
Since $(I,J,K)$ has the intersection property in $\Mcal$, there is a projection $P\in\Mcal$ with $\tau(P)=r/n$ such that
for all $\ell\in\{1,\ldots,r\}$ we have
\[
\tau\left(P\wedge E\left(\frac{i(\ell)}n\right)\right)\ge\frac\ell r,\quad
\tau\left(P\wedge F\left(\frac{j(\ell)}n\right)\right)\ge\frac\ell r,\quad
\tau\left(P\wedge G\left(\frac{k(\ell)}n\right)\right)\ge\frac\ell r.
\]
Let $Q=C\cdot P$ and $R=BC\cdot P$.
Since $C$ and $B$ are invertible, these projections have the same trace as $P$ and we can
choose partial isometries $W_{Q,P},W_{R,P}\in\Mcal$ such that
\[
W_{Q,P}^*W_{Q,P}=P,\quad W_{Q,P}W_{Q,P}^*=Q,\qquad W_{R,P}^*W_{R,P}=P,\quad W_{R,P}W_{R,P}^*=R.
\]
Then we have
\begin{align*}
1&=\Delta^{(P\Mcal P)}(PABCP)
=\Delta^{(P\Mcal P)}(PARBQCP) \\
&=\Delta^{(P\Mcal P)}(PAW_{R,P}W_{R,P}^*BW_{Q,P}W_{Q,P}^*CP) \\
&=\Delta^{(P\Mcal P)}(PAW_{R,P})\,\Delta^{(P\Mcal P)}(W_{R,P}^*BW_{Q,P})\,\Delta^{(P\Mcal P)}(W_{Q,P}^*CP).
\end{align*}
So
\begin{multline}\label{eq:ABClogsum}
0=\log\Delta^{(R\Mcal R)}(W_{R,P}AP)+\log\Delta^{(Q\Mcal Q)}(W_{Q,P}W_{R,P}^*BQ) \\
+\log\Delta^{(P\Mcal P)}(W_{Q,P}^*CP).
\end{multline}
But for all $\ell\in\{1,\ldots,r\}$ we have
\[
R\wedge E_{|A|}\left(\frac{i(\ell)}n\right)=(BC\cdot P)\wedge\left(BC\cdot E\left(\frac{i(\ell)}n\right)\right)
 =BC\cdot\left(P\wedge E\left(\frac{i(\ell)}n\right)\right),
\]
so
\[
\tau\left(R\wedge E_{|A|}\left(\frac{i(\ell)}n\right)\right)=\tau\left(P\wedge E\left(\frac{i(\ell)}n\right)\right)\ge\frac\ell r
\]
and, similarly,
\[
\tau\left(Q\wedge E_{|B|}\left(\frac{j(\ell)}n\right)\right)=\tau\left(P\wedge F\left(\frac{j(\ell)}n\right)\right)\ge\frac\ell r
\]
and, since $E_{|C|}=G$ we have
\[
\tau\left(P\wedge E_{|C|}\left(\frac{k(\ell)}n\right)\right)\ge\frac\ell r.
\]
Using~\eqref{eq:ABClogsum} and applying Lemma~\ref{lem:FKPAP} three times yields the desired inequality~\eqref{eq:intlogs}.
\end{proof}

See~\eqref{eq:Ibar} for the definition of the operation $K\mapsto\Kbar$.

\begin{cor}\label{cor:ABinv}
Let $A,B\in\Mcal$ be invertible and let $D=AB$.
With $(I,J,K)$ as in Theorem~\ref{thm:ABC}, we have
\begin{gather}
\int_{F_I}\log s_A+\int_{F_J}\log s_B\le\int_{F_\Kbar}\log s_D \label{eq:intlog<D} \\
\int_{(F_\Kbar)^c}\log s_D\le\int_{(F_I)^c}\log s_A+\int_{(F_J)^c}\log s_B, \label{eq:intlogD<}
\end{gather}
where the complements in~\eqref{eq:intlogD<} are taken in $[0,1]$.
\end{cor}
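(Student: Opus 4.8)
The plan is to imitate the proof of Corollary~\ref{cor:MnInvIneqs}, with Theorem~\ref{thm:ABC} playing the role of Proposition~\ref{prop:ABCMn}. Since $A$ and $B$ are invertible, so is $D=AB$, and therefore $C:=D^{-1}\in\Mcal$ is invertible with $ABC=1$. Applying Theorem~\ref{thm:ABC} to the triple $A,B,C$ and to the Horn triple $(I,J,K)\in H(n,r)$ gives
\[
\int_{F_I}\log s_A+\int_{F_J}\log s_B+\int_{F_K}\log s_{D^{-1}}\le 0,
\]
so the only real task is to rewrite $\int_{F_K}\log s_{D^{-1}}$ in terms of $s_D$ and check that it equals $-\int_{F_\Kbar}\log s_D$.

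The first ingredient is the reflection identity for singular numbers of an invertible element. Since $|D^{-1}|^2=(DD^*)^{-1}=|D^*|^{-2}$ we have $|D^{-1}|=|D^*|^{-1}$, and $s_{D^*}=s_D$, so $s_{D^{-1}}$ is the eigenvalue function of the inverse of a positive invertible operator whose eigenvalue function is $s_D$; hence $s_{D^{-1}}(t)=s_D(1-t)^{-1}$ for all $t\in[0,1)$ except the (at most countably many) parameters at which $s_D$ is discontinuous. Because $D$ is invertible, $s_D$ is bounded above by $\|D\|$ and below by $\|D^{-1}\|^{-1}>0$ on $[0,1)$, so $\log s_D$ is bounded and the exceptional parameters, being Lebesgue-null, do not affect any integral; thus $\int_{F_K}\log s_{D^{-1}}=-\int_{F_K}\log s_D(1-t)\,dt=-\int_{1-F_K}\log s_D$, where $1-F_K=\{\,1-t\mid t\in F_K\,\}$. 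The second ingredient is purely combinatorial: reflecting the interval $\big[\frac{k-1}{n},\frac{k}{n}\big]$ about $\tfrac12$ produces $\big[\frac{n-k}{n},\frac{n+1-k}{n}\big]=\big[\frac{(n+1-k)-1}{n},\frac{n+1-k}{n}\big]$, and since $\Kbar=\{n+1-k\mid k\in K\}$ this shows $1-F_K=F_\Kbar$. Combining the three facts gives exactly~\eqref{eq:intlog<D}. This handling of the reflection identity (and the Lebesgue-null exceptional set) is the only step requiring any care; the rest is bookkeeping.

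Finally, \eqref{eq:intlogD<} follows from \eqref{eq:intlog<D} together with the multiplicativity of the Fuglede--Kadison determinant: $\int_0^1\log s_D=\log\Delta(D)=\log\Delta(A)+\log\Delta(B)=\int_0^1\log s_A+\int_0^1\log s_B$. Since $F_I$ and $(F_I)^c$ partition $[0,1]$ up to a null set, and similarly for $F_J$ and $F_\Kbar$, subtracting~\eqref{eq:intlog<D} from this identity yields~\eqref{eq:intlogD<}.
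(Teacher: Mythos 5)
Your proposal is correct and follows essentially the same route as the paper: apply Theorem~\ref{thm:ABC} with $C=D^{-1}$, use the reflection identity $s_{D^{-1}}(t)=s_D(1-t)^{-1}$ (a.e.) together with $1-F_K=F_{\Kbar}$ to obtain~\eqref{eq:intlog<D}, and then deduce~\eqref{eq:intlogD<} via multiplicativity of the Fuglede--Kadison determinant. The paper states these steps very tersely; you have simply filled in the details (and your handling of the null exceptional set and the interval reflection is exactly right).
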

\begin{proof}
Applying Theorem~\ref{thm:ABC} with $C=D^{-1}$ and using that the equality
\[
\log s_D(t)=-\log s_C(1-t)
\]
holds for almost every $t\in[0,1]$,
(namely, at points of continuity) yields~\eqref{eq:intlog<D}.
Now using $\log\Delta(T)=\int_{[0,1]}s_T$ and $\Delta(D)=\Delta(A)\Delta(B)$, we get~\eqref{eq:intlogD<} from~\eqref{eq:intlog<D}.
\end{proof}

\begin{thm}\label{thm:AB}
Let $A,B\in\Mcal$ and let $D=AB$.
Then for all $r,n\in\Nats$ and all triples $(I,J,K)$ as in Theorem~\ref{thm:ABC},
the inequalities~\eqref{eq:intlog<D} and~\eqref{eq:intlogD<} hold, with $-\infty$ allowed for values.
\end{thm}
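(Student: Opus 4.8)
The plan is to mimic exactly the approximation argument used in the proof of Theorem~\ref{thm:MnIneqs}, but now replacing the finite-dimensional perturbation $A+\eps 1$ with $A+\eps 1\in\Mcal$ and invoking Corollary~\ref{cor:ABinv} in place of Corollary~\ref{cor:MnInvIneqs}. First I would reduce to the case $A,B\ge0$: writing polar decompositions $A=U|A|$ and $B=|B^*|V$ with $U,V$ unitary in $\Mcal$, one has $s_A=s_{|A|}$, $s_B=s_{|B^*|}$, and $s_D=s_{U|A||B^*|V}=s_{|A||B^*|}$ since left and right multiplication by unitaries does not change singular numbers; so it suffices to prove the inequalities for the product $|A|\,|B^*|$ of two positive elements. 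Thus assume $A,B\ge0$.

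Next, for $\eps_1,\eps_2>0$ set $D(\eps_1,\eps_2)=(A+\eps_1 1)(B+\eps_2 1)$. The elements $A+\eps_1 1$ and $B+\eps_2 1$ are invertible (bounded below by $\eps_1$, resp.\ $\eps_2$), and $s_{A+\eps_1 1}(t)=\eps_1+s_A(t)$, $s_{B+\eps_2 1}(t)=\eps_2+s_B(t)$ for all $t$, because for a positive operator the singular number function is the eigenvalue function and adding a scalar shifts the spectral distribution. Applying Corollary~\ref{cor:ABinv} to the invertible pair $(A+\eps_1 1, B+\eps_2 1)$ with product $D(\eps_1,\eps_2)$ gives, for every $(I,J,K)$ as in Theorem~\ref{thm:ABC},
\begin{gather*}
\int_{F_I}\log(\eps_1+s_A)+\int_{F_J}\log(\eps_2+s_B)\le\int_{F_\Kbar}\log s_{D(\eps_1,\eps_2)}, \\
\int_{(F_\Kbar)^c}\log s_{D(\eps_1,\eps_2)}\le\int_{(F_I)^c}\log(\eps_1+s_A)+\int_{(F_J)^c}\log(\eps_2+s_B).
\end{gather*}

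Finally I would let $\eps_1,\eps_2\to0^+$. On the left of the first inequality (and the right of the second), the integrands $\log(\eps_i+s_A)$, $\log(\eps_i+s_B)$ decrease pointwise to $\log s_A$, $\log s_B$, so by the monotone convergence theorem (applied to $\log(1+\eps_i+s)-\log(\eps_i+s)\ge0$ decreasing, or simply to the decreasing sequences after subtracting a fixed integrable upper bound such as $\log(1+s_A)$) these integrals converge to $\int_{F_I}\log s_A$, etc., with the value $-\infty$ allowed. For the terms involving $D(\eps_1,\eps_2)$, one uses that $s_{D(\eps_1,\eps_2)}\to s_D$: indeed $D(\eps_1,\eps_2)=(A+\eps_1)(B+\eps_2)\to AB=D$ in operator norm, and the singular number functions are $1$-Lipschitz in operator norm uniformly (from Fack--Kosaki~\cite{FaKo86}), so $s_{D(\eps_1,\eps_2)}\to s_D$ uniformly on $[0,1]$; hence $\log s_{D(\eps_1,\eps_2)}\to\log s_D$ pointwise, and on the right side of the first inequality one can pass to the limit using Fatou's lemma in the form $\int_{F_\Kbar}\log s_D\ge\limsup\int_{F_\Kbar}\log s_{D(\eps_1,\eps_2)}$ after bounding the integrands above by the constant $\log\|D\|+1$ (valid for small $\eps$), and symmetrically on the left side of the second inequality with $\liminf$. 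The main technical obstacle is the careful bookkeeping of which terms may tend to $-\infty$ and justifying the interchange of limit and integral in each case with the correct one-sided bound (monotone convergence where the sequence is monotone, dominated/Fatou with a fixed integrable majorant elsewhere); but this is exactly parallel to the matrix argument in Theorem~\ref{thm:MnIneqs} and presents no genuine difficulty, since all integrands are bounded above by integrable functions and the only divergence possible is to $-\infty$, which is permitted in the statement.
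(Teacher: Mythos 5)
Your proposal follows the same skeleton as the paper's proof (polar decomposition to reduce to $A,B\ge0$, perturb to $(A+\eps_11)(B+\eps_21)$, apply Corollary~\ref{cor:ABinv}, pass to the limit), but it misses the one genuinely nontrivial step in the paper's argument, and the substitute you offer does not actually close the gap for inequality~\eqref{eq:intlogD<}.

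The issue is the passage to the limit in the term $\int\log s_{D(\eps_1,\eps_2)}$. For~\eqref{eq:intlog<D} your reverse-Fatou argument is fine: with the integrable upper bound one gets
$\limsup_{\eps\to0}\int_{F_\Kbar}\log s_{D(\eps_1,\eps_2)}\le\int_{F_\Kbar}\log s_D$, and combined with monotone convergence on the left side this yields~\eqref{eq:intlog<D}. But for~\eqref{eq:intlogD<} what you need is the \emph{opposite} bound,
$\int_{(F_\Kbar)^c}\log s_D\le\liminf_{\eps\to0}\int_{(F_\Kbar)^c}\log s_{D(\eps_1,\eps_2)}$,
and this is the \emph{ordinary} Fatou direction, which requires an integrable \emph{lower} bound on $\log s_{D(\eps_1,\eps_2)}$ — exactly what you do not have, since these integrands are unbounded below as $\eps\to0$. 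Uniform convergence $s_{D(\eps_1,\eps_2)}\to s_D$ does not give you control on $\log s_{D(\eps_1,\eps_2)}$ near the zero set of $s_D$; indeed the possibility that $s_{D(\eps_1,\eps_2)}<s_D$ on a small set where $s_D$ is near $0$ is exactly what a naive uniform-convergence argument cannot rule out. So the sentence ``and symmetrically on the left side of the second inequality with $\liminf$'' is not justified.

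The paper avoids this by proving a monotonicity fact you did not observe: writing $\|D(\eps_1,\eps_2)(1-P)\|^2=\|(1-P)(B+\eps_2)(A^2+2\eps_1A+\eps_1^2)(B+\eps_2)(1-P)\|$ and noting $A^2+2\eps_1'A+\eps_1'^2\le A^2+2\eps_1A+\eps_1^2$ for $\eps_1'\le\eps_1$, one sees from the definition~\eqref{eq:snums} that $s_{D(\eps_1,\eps_2)}$ is \emph{nonincreasing} in $\eps_1$ (and, using $s_{D(\eps_1,\eps_2)}=s_{(B+\eps_21)(A+\eps_11)}$, also in $\eps_2$). Hence $\log s_{D(\eps_1,\eps_2)}\downarrow\log s_D$ pointwise, and the monotone convergence theorem for decreasing sequences (with $-\infty$ allowed) gives $\int_E\log s_{D(\eps_1,\eps_2)}\to\int_E\log s_D$ for every measurable $E\subseteq[0,1]$, handling \emph{both} inequalities at once, including the edge cases where the integrals are $-\infty$. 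You should insert this monotonicity lemma; once it is in place your argument collapses to the paper's and the Fatou/Lipschitz discussion becomes unnecessary.
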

\begin{proof}
Writing $A=U|A|$ and $B=|B^*|V$ for unitaries $U$ and $V$ we have $D=U|A||B^*|V*$ and $s_A=s_{|A|}$, $s_B=s_{|B^*|}$
and $s_{D}=s_{|A||B^*|}$.
Thus, we may without loss of generality assume $A\ge0$ and $B\ge0$.
Let $\eps_1,\eps_2>0$ and let $D(\eps_1,\eps_2)=(A+\eps_11)(B+\eps_21)$.
Then $s_{A+\eps_11}=\eps_1+s_A$ and $s_{B+\eps_21}=\eps_2+s_B$
and from Corollary~\ref{cor:ABinv}, we get
\begin{gather}
\int_{F_I}\log(\eps_1+s_A)+\int_{F_J}\log(\eps_2+s_B)\le\int_{F_\Kbar}\log s_{D(\eps_1,\eps_2)} \label{eq:intlog<Deps} \\
\int_{(F_\Kbar)^c}\log s_{D(\eps_1,\eps_2)}\le\int_{(F_I)^c}\log(\eps_1+s_A)+\int_{(F_J)^c}\log(\eps_2+s_B). \label{eq:intlogDeps<}
\end{gather}

Since for any projection $P\in\Mcal$, and for $0<\eps_1'\le\eps_1$ we have
\[
\|D(\eps_1,\eps_2)(1-P)\|^2=\|(1-P)(B+\eps_2)(A^2+2\eps_1A+\eps_1^2)(B+\eps_2)(1-P)\|
\]
and
\[
A^2+2\eps_1'A+(\eps_1')^2\le A^2+2\eps_1A+\eps_1^2,
\]
we get $\|D(\eps'_1,\eps_2)(1-P)\|\le\|D(\eps_1,\eps_2)(1-P)\|$ and,
from the definition~\eqref{eq:snums} of singular numbers, we obtain that $s_{D(\eps_1,\eps_2)}$ is decreasing
in $\eps_1$.
However, since $D(\eps_1,\eps_2)$ and $(B+\eps_21)(A+\eps_11)$ have the same singular numbers,
we similarly obtain that $s_{D(\eps_1,\eps_2)}$ is decreasing
in $\eps_2$.
Now letting $\eps_1,\eps_2\to0$ and using the monotone convergence theorem in~\eqref{eq:intlog<Deps} and~\eqref{eq:intlogDeps<},
we obtain the desired inequalities~\eqref{eq:intlog<D} and~\eqref{eq:intlogD<} for our $A$, $B$ and $D$.
\end{proof}

\section{The multiplicative Horn problem in finite Neumann algebras}
\label{sec:finvN}

In this section we solve the multiplicative Horn problem in finite von Neumann algebras.
The solution is analogous to the result in matrix algebras that was proved in Section~\ref{sec:non-inv}.

Let $\SVF$ denote the set of all
real-valued, right-continuous, non-negative, non-increasing functions on $[0,1]$.
These are the functions that can be singular value functions of elements in finite von Neumann algebras.

For a Horn triple $(I,J,K)\in H(n,r)$, we will make use of the subsets $F_I$ of $[0,1]$ introduced in~\eqref{eq:FI}
at the beginning of Section~\ref{sec:sing}.

Let $f,g\in\SVF$.
Let
$M_{f,g}$ be the set of all $h\in\SVF$ such that
there exists a diffuse, finite von Neumann algebra $\Mcal$ with normal, faithful tracial state $\tau$ and there exist $A,B\in\Mcal$ 
yielding singular number functions $s_A=f$, $s_B=g$  and $s_{AB}=h$.
Our goal is to describe the set $M_{f,g}$.

Mimicking the finite dimensional case,
we define
\begin{align*}
\Mt_{f,g}=\bigg\{h\in\SVF\;\bigg|\;&\forall n\in\Nats\;\forall(I,J,K)\in\bigcup_{r=1}^n H(n,r), \\
&\int_{F_I}\log f+\int_{F_J}\log g\leq \int_{F_\Kbar}\log h \\
&\text{and }
\int_{(F_I)^c}\log f+\int_{(F_J)^c}\log g\geq \int_{(F_\Kbar)^c}\log h
\bigg\},
\end{align*}
where of course, the definitions of $F_I$, {\em etcetera}, in the above inequalities depend on the value of $n$ under consideration,
where $-\infty$ is allowed for values of the integrals
and where the complements are taken in $[0,1]$.

Our main result is:
\begin{thm}\label{thm:MMt}
For all $f,g\in\SVF$, we have $M_{f,g}=\Mt_{f,g}$
\end{thm}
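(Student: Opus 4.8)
The plan is to establish $M_{f,g}=\Mt_{f,g}$ by the same two-sided strategy used for $K_{\lambda,\mu}=\Kt_{\lambda,\mu}$ in Theorem~\ref{thm:KKt}, but now with the discrete approximation of singular value functions replacing the $\eps$-perturbation of eigenvalue tuples. The inclusion $M_{f,g}\subseteq\Mt_{f,g}$ is immediate from Theorem~\ref{thm:AB}, which is precisely the statement that the defining inequalities of $\Mt_{f,g}$ hold for any $A,B$ in a finite von Neumann algebra with $D=AB$. So the whole content is the reverse inclusion $\Mt_{f,g}\subseteq M_{f,g}$.

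For the reverse inclusion, fix $h\in\Mt_{f,g}$. The idea is to approximate $f,g,h$ by step functions taking finitely many values on intervals with rational endpoints of denominator $n$, reduce to the matrix case via Theorem~\ref{thm:KKt}, and then pass to the limit inside a suitable von Neumann algebra (e.g.\ a tracial ultraproduct of matrix algebras, i.e.\ $R^\omega$, or $L^\infty[0,1]\bar\otimes R$). Concretely, for each $n$ I would set $\lambda^{(n)}_i=f((i-1)/n)$, $\mu^{(n)}_i=g((i-1)/n)$ for $i=1,\dots,n$, so that the normalized counting measure of $\lambda^{(n)}$ converges to the distribution of $f$, and likewise for $g$; the discretized Horn inequalities of $\Kt_{\lambda^{(n)},\mu^{(n)}}$ are exactly Riemann-sum versions of the integral inequalities defining $\Mt_{f,g}$ (this is the reason $F_I$ was defined as a union of intervals of length $1/n$ in~\eqref{eq:FI}). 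The subtlety, exactly as flagged in Section~\ref{sec:non-inv}, is that $h$ itself need not be the limit of a sequence $\nu^{(n)}\in K_{\lambda^{(n)},\mu^{(n)}}$ obtained by naive discretization, because the discretized inequalities need not be satisfied on the nose by the truncated $h$; one must choose $\nu^{(n)}$ satisfying the finite Horn system and converging to $h$. Here I would use the $\eps,\delta$ device from the proof of Theorem~\ref{thm:KKt} together with the interpolation result Proposition~3.2 of~\cite{BLT09}: perturb $f,g$ up slightly and perturb $h$ in both directions to get strict inequalities in both families, apply the interpolation to find genuine $(\lambda^{(n)},\mu^{(n)},\nu^{(n)})$ in (the log of) the finite-dimensional Horn body, invoke Lemma~\ref{lem:invertibleCase}/Theorem~\ref{thm:KKt} to realize $\nu^{(n)}$ by a unitary conjugation in $M_n(\Cpx)$, and then take a limit of the resulting elements $\diag(\lambda^{(n)})U_n\diag(\mu^{(n)})$ along an ultrafilter.

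Thus the key steps, in order, are: (1) reduce to proving $\Mt_{f,g}\subseteq M_{f,g}$; (2) for $h\in\Mt_{f,g}$ and $\eps>0$, produce perturbations $f_\eps\ge f$, $g_\eps\ge g$ and a function $h_\eps$ with $|h_\eps-h|\le\eps$ such that \emph{all} the Horn-type integral inequalities hold strictly (as in Theorem~\ref{thm:KKt}, using $\Mt^{\pm}$-type notation and that strictness fails only for the trivial/full triples, where equality is forced by the determinant identity $\int\log f+\int\log g=\int\log h$ which itself follows from the $n=1$, $r=0,1$ inequalities); (3) discretize at scale $n$ and apply the finite-dimensional interpolation of~\cite{BLT09} to get admissible tuples $(\lambda^{(n)},\mu^{(n)},\nu^{(n)})$ whose empirical distributions converge to $\mu_f,\mu_g,\mu_h$ and which satisfy the finite multiplicative Horn inequalities; (4) realize each $\nu^{(n)}$ as the singular values of $\diag(\lambda^{(n)})U_n\diag(\mu^{(n)})$ via Theorem~\ref{thm:KKt}; (5) form the element $T=(A_n)^\bullet(B_n)^\bullet$ with $A_n=\diag(\lambda^{(n)})U_n$, $B_n=\diag(\mu^{(n)})$ in the ultraproduct $\prod_\omega M_n(\Cpx)$, and check via weak-$*$ convergence of spectral distributions that $s_A=f$, $s_B=g$, $s_{AB}=h$ there (using that singular value functions are determined by spectral distributions of $A^*A$, $B^*B$, $(AB)^*AB$ and that these distributions are continuous under the relevant convergence, handling atoms and the boundary behaviour at $0$ carefully). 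I expect step (5) — verifying that the ultraproduct element has \emph{exactly} the prescribed singular value functions, including correct behaviour of atoms and of the part of the spectrum at $0$ (where $f,g,h$ may vanish on a set of positive measure, since they need not be invertible) — to be the main technical obstacle, since singular numbers are only semicontinuous in general and one must exploit the precise convergence of the empirical distributions rather than mere norm bounds.
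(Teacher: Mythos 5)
Your overall strategy --- discretize at scale $n$, reduce to the matrix case via Theorem~\ref{thm:KKt}, and pass to an ultraproduct $\prod_\omega M_n(\Cpx)$ --- is exactly the paper's route, and the easy inclusion $M_{f,g}\subseteq\Mt_{f,g}$ via Theorem~\ref{thm:AB} is also identical. Where you diverge, and where your proposal becomes substantially heavier than necessary, is in the choice of discretization, and this is not a cosmetic difference.

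You take $\lambda^{(n)}_i=f((i-1)/n)$ (left-endpoint samples), correctly observe that the resulting finite Horn inequalities are only Riemann-sum approximations of the integral inequalities defining $\Mt_{f,g}$, and then propose to repair the mismatch by the $\eps$--$\delta$ perturbation and the interpolation result of~\cite{BLT09}, as in the proof of Theorem~\ref{thm:KKt}. The paper sidesteps all of this with a different discretization: it sets
\[
s^{(n)}_j=\exp\left(n\int_{(j-1)/n}^{j/n}\log s(x)\,dx\right),
\]
the geometric mean of $s$ over each subinterval of length $1/n$. With this choice, for any Horn triple $(I,J,K)\in\bigcup_{r\le n}H(n,r)$ one has, e.g.,
\[
\sum_{i\in I}\log f^{(n)}_i=n\int_{F_I}\log f,
\]
so the inequalities defining $\Kt_{f^{(n)},g^{(n)}}$ are \emph{verbatim} the subfamily of the inequalities defining $\Mt_{f,g}$ corresponding to that value of $n$, with no error term at all. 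Consequently $h\in\Mt_{f,g}$ gives $h^{(n)}\in\Kt_{f^{(n)},g^{(n)}}$ immediately, and Theorem~\ref{thm:KKt} produces the required $A_n,B_n\in M_n(\Cpx)$ with no perturbation, no choice of $\delta(\eps)$, and no second invocation of~\cite{BLT09}. The perturbation/interpolation machinery is used in the paper only once, inside the proof of Theorem~\ref{thm:KKt} itself, where it is genuinely needed to pass from the invertible to the non-invertible case; your proposal would invoke it a second time, and moreover in a setting where you would then have to quantify, uniformly over the (growing) family of Horn triples at level $n$, that the Riemann-sum error is dominated by the strictness gap --- a nontrivial bookkeeping problem you flag but do not resolve.

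Your step (5), identifying exactly the singular value functions of the ultraproduct elements, is indeed the point where a lazy discretization could get you in trouble, but with the paper's $s^{(n)}$ the empirical spectral distributions of $\diag(f^{(n)})$, $\diag(g^{(n)})$, $\diag(h^{(n)})$ converge to the distributions induced by $f,g,h$ in a way that makes the conclusion routine; the paper accordingly treats it as such. In short: correct architecture, but you missed the one clean idea --- take exponentials of averaged logarithms, not samples --- that collapses the hard middle of the argument to a single line.
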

\begin{proof}
The inclusion $M_{f,g}\subseteq\Mt_{f,g}$ follows from Theorem \ref{thm:AB}.

To show the reverse inclusion.
Let $f,g\in\SVF$ and let $h\in\Mt_{f,g}$.
For a function $s\in\SVF$ and $n\in\Nats$, we let
$s^{(n)}\in\Reals_+^{n\downarrow}$ be the sequence whose $j$-th element is
\[
s^{(n)}_j=\exp\left(n\int_{(j-1)/n}^{j/n}\log s(x)dx\right).
\]
Now from $h\in\Mt_{f,g}$, we easily verify $h^{(n)}\in\Kt_{f^{(n)},g^{(n)}}$,
and using Theorem~\ref{thm:KKt}, we deduce that there are
matrices
$A_n,B_n\in M_n(\Cpx)$
with the property that the singular values of $A_n$ and, respectively, of $B_n$ and $A_nB_n$
are the sequences $f^{(n)}$ and, respectively, $g^{(n)}$ and $h^{(n)}$.
Now, letting $\omega$ be a free ultrifilter on $\Nats$ and letting $\Mcal=\prod_\omega M_n(\Cpx)$
be the corresponding ultraproduct of matrix algebras,
letting $A,B\in\Mcal$ be the elements represented by sequences $(A_n)_{n=1}^\infty$ and $(B_n)_{n=1}^\infty$, respectively,
we have that the singular value functions of $A$, $B$ and $AB$ are, respectively, $f$, $g$ and $h$.
Thus, $h\in M_{f,g}$.
\end{proof}

Let us conclude by expanding on the remark made in the penultimate paragraph of the introduction about Connes' embedding property. 
For $f,g\in\SVF$, let $M_{f,g}^{\text{emb}}$ be the set of all $h\in\SVF$
such that there exist $A$ and $B$ in an ultrapower $R^\omega$ of the hyperfinite II$_1$-factor,
with singular value functions $s_A=f$, $s_B=g$ and $s_{AB}=h$.
We clearly have $M_{f,g}^{\text{emb}}\subseteq M_{f,g}$, while
the proof of the above theorem actually showed $\Mt_{f,g}\subseteq M_{f,g}^{\text{emb}}$.
Thus, we get:
\begin{cor}
For any $f,g\in\SVF$, we have
$M_{f,g}=M_{f,g}^{\text{emb}}$.
\end{cor}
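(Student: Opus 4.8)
The plan is to establish both inclusions of $M_{f,g}=M_{f,g}^{\text{emb}}$ separately, observing that one is immediate and the other has already been done implicitly in the proof of Theorem~\ref{thm:MMt}. First I would note that since $R^\omega$ is itself a diffuse finite von Neumann algebra equipped with a normal, faithful, tracial state (the trace on the hyperfinite II$_1$-factor $R$ passes to the ultrapower), any pair $A,B\in R^\omega$ with prescribed singular value functions witnesses membership in $M_{f,g}$ with the choice $\Mcal=R^\omega$. Hence $M_{f,g}^{\text{emb}}\subseteq M_{f,g}$ with no work.

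For the reverse inclusion, I would combine the chain of inclusions already available. By Theorem~\ref{thm:MMt} we have $M_{f,g}=\Mt_{f,g}$, so it suffices to show $\Mt_{f,g}\subseteq M_{f,g}^{\text{emb}}$. But this is exactly what the second half of the proof of Theorem~\ref{thm:MMt} demonstrated: starting from $h\in\Mt_{f,g}$, it produced matrices $A_n,B_n\in M_n(\Cpx)$ with the correct singular values of $A_n$, $B_n$ and $A_nB_n$, and then took $A,B$ to be the elements of the matrix ultraproduct $\Mcal=\prod_\omega M_n(\Cpx)$ represented by the sequences $(A_n)$ and $(B_n)$. The point I would emphasize is that this particular $\Mcal$ is not merely an abstract finite von Neumann algebra: the ultraproduct of matrix algebras $\prod_\omega M_n(\Cpx)$ is $*$-isomorphic (in a trace-preserving way) to an ultrapower $R^\omega$ of the hyperfinite II$_1$-factor, since $R$ itself embeds the matrix algebras compatibly with traces and the matrix ultraproduct is hyperfinite. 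Consequently the $A$ and $B$ constructed there in fact live in $R^\omega$, so $h\in M_{f,g}^{\text{emb}}$.

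Putting these together gives $M_{f,g}\subseteq M_{f,g}^{\text{emb}}\subseteq M_{f,g}$, hence equality. The only step requiring any care is the identification of $\prod_\omega M_n(\Cpx)$ with an ultrapower of $R$; this is standard, but I would either cite it or remark that one may alternatively replace each $M_n(\Cpx)$ by a copy inside $R$ (via a trace-preserving unital embedding $M_n(\Cpx)\hookrightarrow R$) and form the corresponding ultraproduct, which by definition lies in $R^\omega$ and still carries the required $A$, $B$. Since the singular value functions of $A$, $B$ and $AB$ depend only on mixed moments with the trace, they are unchanged under this relocation, so no analysis beyond what appears in the proof of Theorem~\ref{thm:MMt} is needed.
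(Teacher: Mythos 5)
Your proof is correct and follows essentially the same route as the paper's, which just observes that $M_{f,g}^{\text{emb}}\subseteq M_{f,g}$ is clear and that the second half of the proof of Theorem~\ref{thm:MMt} already produced the required $A,B$ in a matrix ultraproduct, hence $\Mt_{f,g}\subseteq M_{f,g}^{\text{emb}}$. One small caution: your primary claim that $\prod_\omega M_n(\Cpx)$ is $*$-isomorphic to $R^\omega$ is stronger than needed and is actually a delicate point (for general free ultrafilters it is sensitive to set-theoretic hypotheses), so you should rely on the alternative you correctly offer --- a trace-preserving unital embedding $M_n(\Cpx)\hookrightarrow R$ induces $\prod_\omega M_n(\Cpx)\hookrightarrow R^\omega$, and singular value functions are preserved under such embeddings, which is all that is required.
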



\begin{bibdiv}
\begin{biblist}

\bib{Be01}{article}{
  author={Belkale, Prakash},
  title={Local systems on $\mathbb{P}^1-S$ for $S$ a finite set},
  journal={Compos. Math.},
  volume={129},
  year={2001},
  pages={67-86}
}

\bib{BCDLT10}{article}{
  author={Bercovici, Hari},
  author={Collins, Beno\^it},
  author={Dykema, Ken},
  author={Li, Wing Suet},
  author={Timotin, Dan},
  title={Intersections of Schubert varieties and eigenvalue inequalitites in an arbitrary finite factor},
  journal={J. Funct. Anal.},
  year={2010},
  volume={258},
  pages={1579--1627}
}

\bib{BL01}{article}{
  author={Bercovici, Hari},
  author={Li, Wing-Suet},
  title={Inequalities for eigenvalues of sums in a von Neumann algebra},
  conference={
    title={Recent advances in operator theory and related topics},
    address={Szeged},
    date={1999},
  },
  book={
    series={Oper. Theory Adv. Appl.},
    volume={127},
    year={2001},
    publisher={Birkh\"auser},
    address={Basel},
  },
  pages={113--126}
}

\bib{BLT09}{article}{
   author={Bercovici, H.},
   author={Li, W. S.},
   author={Timotin, D.},
   title={The Horn conjecture for sums of compact selfadjoint operators},
   journal={Amer. J. Math.},
   volume={131},
   date={2009},
   pages={1543--1567},
}

\bib{CD09}{article}{
  author={Collins, Beno\^it},
  author={Dykema, Ken},
  title={On a reduction procedure for Horn inequalities in finite von Neumann algebras},
  journal={Oper. Matrices},
  volume={3},
  year={2009},
  pages={1--40},
}

\bib{FaKo86}{article}{
   author={Fack, Thierry},
   author={Kosaki, Hideki},
   title={Generalized $s$-numbers of $\tau$-measurable operators},
   journal={Pacific J. Math.},
   volume={123},
   date={1986},
   pages={269--300},
}
		
\bib{FuKa52}{article}{
  author={Fuglede, Bent},
  author={Kadison, Richard V.},
  title={Determinant theory in finite factors},
  journal={Ann. of Math.},
  volume={55},
  year={1952},
  pages={520--530}
}

\bib{Fulton00}{article}{
  author={Fulton, William},
  title={Eigenvalues, invariant factors, highest weights, and Schubert calculus},
  journal={Bull. Amer. Math. Soc. (N.S.)},
  volume={37 (3)},
  year={2000},
  pages={209--249}
}

\bib{H07}{article}{
   author={Harada, Tetsuo},
   title={Multiplicative versions of Klyachko's theorem in finite factors},
   journal={Linear Algebra Appl.},
   volume={425},
   date={2007},
   pages={102--108},
}

\bib{H62}{article}{
  author={Horn, Alfred},
  title={Eigenvalues of sums of Hermitian matrices},
  journal={Pacific J. Math},
  volume={12},
  year={1962},
  pages={225--241}
}

\bib{Kl98}{article}{
  author={Klyachko, Alexander A.},
  title={Stable bundles, representation theory and Hermitian operators},
  journal={Selecta Math. (N.S.)},
  volume={4},
  year={1998},
  pages={419--445}
}

\bib{Kl00}{article}{
  author={Klyachko, Alexander A.},
  title={Random walks on symmetric spaces and inequalities for matrix spectra},
  journal={Linear Algebra Appl.},
  volume={319},
  year={2000},
  pages={37--59}
}

\bib{KT99}{article}{
  author={Knutson, Allen},
  author={Tao, Terrence},
  title={The honeycomb model of GL$_{n}(\mathbb{C})$ tensor products. I. Proof of the saturation conjecture},
  journal={J. Amer. Math. Soc.},
  volume={12},
  year={1999},
  pages={1055--1090}
}

\bib{Sp05}{article}{
   author={Speyer, David E.},
   title={Horn's problem, Vinnikov curves, and the hive cone},
   journal={Duke Math. J.},
   volume={127},
   date={2005},
   pages={395--427},
}

\bib{W12}{article}{
  author={Weyl, H.},
  title={Das asymptotische Verteilungsgesetz der Eigenwerte lineare partieller Differentialgleichungen},
  journal={Math. Ann.},
  volume={71},
  year={1912},
  pages={441--479},
}

\end{biblist}
\end{bibdiv}

\end{document}